\documentclass[10pt]{amsart}

\usepackage[T1]{fontenc}
\usepackage[british]{babel}

\usepackage[a4paper,margin=3cm]{geometry}
\allowdisplaybreaks
\usepackage{amssymb}

\usepackage[
hyperfootnotes=false
]{hyperref}

\hypersetup{
    colorlinks=true,
    linkcolor=blue,
    citecolor=blue,
    urlcolor=red
}

\theoremstyle{plain}
\newtheorem{lemma}{Lemma}[section]
\newtheorem{theorem}[lemma]{Theorem}
\newtheorem{proposition}[lemma]{Proposition}
\newtheorem{corollary}[lemma]{Corollary}

\theoremstyle{definition}
\newtheorem{definition}[lemma]{Definition}

\theoremstyle{remark}
\newtheorem{remark}[lemma]{Remark}
\newtheorem{example}[lemma]{Example}

\numberwithin{equation}{section}

\newcommand{\R}{\mathbb R}

\begin{document}

\title[Intrinsic Harnack inequality]
{Harnack inequality for parabolic fractional
$p$-Laplace equations}

\begin{abstract}
We establish an intrinsic Harnack inequality with an optimal tail term for weak solutions of parabolic fractional $p$-Laplace equations with $p>2$
and $s\in(0,1)$ whose kernels are symmetric, measurable, and
comparable to $|x-y|^{-n-sp}$. It constitutes the nonlocal analogue
of the intrinsic Harnack inequality of DiBenedetto, Gianazza and Vespri.
In contrast to the exponential change of variables  and expansion-of-positivity used in the local case,
our technique is nonlocal and relies on comparison arguments and barriers in a variational
framework. Indeed, we obtain the stronger conclusion where the forward comparison starts immediately after the reference time,
without an additional time gap, which is false in the local case. The latter leads
to elliptic-type Harnack estimates for parabolic equations. Our result is new even in the linear case.

\end{abstract}

\keywords{Nonlocal parabolic equations, fractional $p$-Laplacian, intrinsic
Harnack inequality, tail estimates, measurable kernels}

\subjclass[2020]{Primary 35B45, 35R11; Secondary 35K65, 35B65}

\author[H.~Prasad]{Harsh Prasad}
\address[H.~Prasad]{Fakult\"at f\"ur Mathematik, Universit\"at Bielefeld,
33615 Bielefeld, Germany.}
\email[]{hprasad@math.uni-bielefeld.de}

\maketitle
\setcounter{tocdepth}{1}
\tableofcontents

\section{Introduction}

We study local weak solutions of the nonlocal parabolic equation
\[
\partial_tu+\mathcal L_tu=0,
\]
where $\mathcal L_t$ is an operator of fractional $p$-Laplace type,
with $p>2$ and $s\in(0,1)$. The kernel is symmetric and measurable,
may depend on time, and is comparable to $|x-y|^{-n-sp}$.
We establish a nonlocal analogue of the intrinsic Harnack inequality
of DiBenedetto, Gianazza and Vespri \cite{DGVActa,DGVDuke},
with an optimal time-integrated tail term; see Theorem \ref{thm:main}.
Our proof uses comparison with time-dependent barriers in place of
expansion of positivity and exploits the nonlocal interactions
as a source of positivity.

Harnack inequalities and H\"older estimates are central to the local
regularity theory of elliptic and parabolic equations. Early nonlocal
Harnack inequalities were obtained for harmonic functions associated
with stable-like jump processes in \cite{BassLevin}. A variational
approach to local regularity for integro-differential operators with
measurable kernels was developed in \cite{KassmannRegularity}.
In the nonlinear elliptic setting, local boundedness and H\"older
continuity for fractional $p$-Laplacian were established in
\cite{DKPRegularity}, and a Harnack inequality in \cite{DKPHarnack}.
A unified treatment of regularity and Harnack estimates through
fractional De Giorgi classes was given in \cite{Cozzi}; see also
\cite{APTElliptic} for an alternative proof of H\"older regularity
based on a continuous iteration.

In the linear nonlocal parabolic setting, probabilistic methods
yielded parabolic Harnack inequalities and two-sided heat kernel
estimates for stable-like processes
\cite{ChenKumagai}. Stability and characterisations of parabolic
Harnack inequalities for symmetric nonlocal Dirichlet forms on
metric measure spaces were developed in \cite{ChenKumagaiWang}.
On the analytic side, H\"older estimates for parabolic integral
equations were established in \cite{CaffarelliChanVasseur}, while
\cite{FelsingerKassmann} proved robust weak Harnack inequality and H\"older estimates. Further
parabolic Harnack estimates were obtained in
\cite{StromqvistHarnack}. For local weak solutions with merely
measurable, time-dependent coefficients, a full Harnack inequality
with time-integrated tails was established in
\cite{KassmannWeidner}. 
The distinction between local and global solutions is also important
for the time geometry. Time-insensitive Harnack estimates, with
overlapping comparison intervals, hold for global solutions of
linear nonlocal equations \cite{LiaoWeidner}; the same work shows
that such estimates fail in general when the equation is imposed
only locally. 

For parabolic fractional $p$-Laplace equations, local boundedness
was obtained in \cite{Stromqvist,DingZhangZhou, PrasadTewary}.  H\"older regularity for kernels
that are merely measurable and comparable to the fractional kernel
was developed in \cite{APT,LiaoHolder}.

The integrability in time of the tail is an important distinction
between regularity results. Boundedness and H\"older estimates in
\cite{ByunKim} require a tail condition weaker than a
supremum-in-time bound, but stronger than time-integrability of the
spatial tail density. Under the latter condition, a modulus of
continuity for locally bounded solutions was established in
\cite{LiaoModulus}, together with a H\"older modulus under a stronger
condition. Local boundedness estimates involving only the
time-integrated tail were obtained in \cite{KumagaiWangZhang}.
Natural-energy local boundedness under this tail condition was
proved on the Heisenberg group in \cite{KarTewary}; the argument
also applies in the Euclidean setting, as explained after
Theorem \ref{thm:main}. The distinction between Harnack and H\"older
estimates is substantive: already in the linear case, an
$L^1$-in-time tail is sufficient for the former but does not in
general yield the latter \cite{KassmannWeidner}.

Further regularity results concern related nonlinear equations.
A weak Harnack inequality for the nonlocal Trudinger equation was
proved in \cite{PrasadWeakHarnack}, while local H\"older regularity was established in
\cite{AdimurthiTrudinger}. Expansion of positivity for a broader
family of doubly nonlinear fractional equations was obtained in
\cite{MisawaYamaura}. For nonlocal doubly degenerate parabolic
equations, local H\"older continuity
was established in \cite{LiDoublyDegenerate}. Local H\"older
regularity for bounded weak solutions in a mixed singular-degenerate
range was obtained in \cite{AdimurthiModasiya}.
For porous medium-type equations
on bounded domains, global Harnack inequalities and interior and
boundary regularity were established in \cite{BonforteFigalliVazquez}.
Local H\"older continuity for nonlocal porous medium and fast diffusion equations with bounded measurable kernels was established in
\cite{KimLeePrasad}. 

For degenerate parabolic equations, the geometry of the Harnack
inequality also depends on the size of the solution. In the local
$p$-Laplace theory, this dependence is expressed through intrinsic
scaling and a waiting time proportional to $k^{2-p}r^p$, where $k$
is the positive value being compared and $r$ is the spatial radius.
We refer to \cite{DiBenedetto,DGV} for this theory.

In this paper, we establish an intrinsic Harnack inequality for
parabolic fractional $p$-Laplace equations in the degenerate range
$p>2$, with bounded measurable kernels. The estimate compares a
positive value $k=u(x_0,t_0)$ with the essential infimum on
$B_r(x_0)\times(t_0,t_0+2\tau)$, where $\tau=k^{2-p}r^{sp}$.
There is no additional forward time gap, but the assumptions retain
a backward intrinsic history interval.
It applies to local energy weak solutions under the time-integrated
tail condition without additional
local integrability or a supremum-in-time tail bound. The result thus
extends the intrinsic Harnack principle
\cite{DGVActa,DGVDuke} to purely nonlocal degenerate diffusion
and strengthens it in the nonlocal case. Indeed, we get elliptic type 
Harnack estimates for parabolic equations which are false in the local case. 

Beyond the estimate itself, the paper develops a variational
comparison approach to intrinsic Harnack inequalities that exploits
nonlocality as a source of positivity. It accommodates both nonlinear
degeneracy and merely measurable, time-dependent kernels, and avoids
an expansion-of-positivity argument for the original solution.
Our use of favourable nonlocal interactions to drive a time-dependent
barrier has precedents in the nonvariational arguments of
\cite{SilvestreDCDS,SilvestreAdvection}.
The method also provides alternative proofs of the linear parabolic
Harnack inequality, as explained in Appendix \ref{app:linear}, and,
by restriction to time-independent solutions, of the elliptic Harnack
inequality for $p>2$. As mentioned above, we are able to get
much stronger results via our method than those available earlier. 
We expect that our variational comparison approach will also be useful in establishing Harnack inequalities 
and other regularity results for broader classes of nonlocal equations. 
\\

\noindent\textbf{LLM Declaration}
During the preparation of this manuscript, I used ChatGPT and Claude
to assist with literature searches, the drafting and revision of the
exposition, and the exploration, drafting, and verification of parts of
the arguments. I have independently checked all mathematical statements
and proofs and take responsibility for the contents of the manuscript.

\section{Setting and Main Theorem}

\subsection{Notation and function spaces}

Throughout the paper, $n\ge1$, $p>2$, and $s\in(0,1)$. We set

\[
    N:=n+sp,
    \qquad
    F(a):=|a|^{p-2}a.
\]

For $x_0\in\R^n$ and $r>0$, $B_r(x_0)$ denotes the open Euclidean ball
with centre $x_0$ and radius $r$; we write $B_r:=B_r(0)$. If $E$ is a
measurable set, then $|E|$ denotes its Lebesgue measure. The notation
$E\Subset G$ means that $\overline E$ is a compact subset of the open set
$G$. For a real-valued function $v$, we use

\[
    v_+:=\max\{v,0\},
    \qquad
    v_-:=\max\{-v,0\}.
\]

For $z_0=(x_0,t_0)$ and $\rho,\theta>0$, we use the cylinders
\[
\begin{split}
Q^-_\rho(z_0;\theta)&:=B_\rho(x_0)\times(t_0-\theta\rho^{sp},t_0],\\
Q^+_\rho(z_0;\theta)&:=B_\rho(x_0)\times(t_0,t_0+\theta\rho^{sp}].
\end{split}
\]

For an open set $D\subset\R^n$, we write
\[
[v]_{W^{s,p}(D)}^p
:=\iint_{D\times D}
\frac{|v(x)-v(y)|^p}{|x-y|^{n+sp}}\,dx\,dy.
\]
The fractional Sobolev space $W^{s,p}(D)$ is equipped with the norm
\begin{equation}\label{eq:Wsp-norm}
\|v\|_{W^{s,p}(D)}^p
:=
\int_D|v|^p\,dx
+[v]_{W^{s,p}(D)}^p.
\end{equation}
We write $W^{s,p}_{\mathrm{loc}}(D)$ for the functions belonging to
$W^{s,p}(D')$ for every $D'\Subset D$. For every measurable set
$A\subset\R^n$, we define
\[
    W^{s,p}_0(A)
    :=
    \left\{
    v\in W^{s,p}(\R^n):
    v=0\ \text{a.e. in }\R^n\setminus A
    \right\}.
\]

All space-time function spaces below are Bochner spaces. In particular,
$L^q_{\mathrm{loc}}(I;X)$ means $L^q(J;X)$ for every compact interval
$J\Subset I$, and $C_{\mathrm{loc}}(I;X)$ has the analogous meaning.

Constants denoted by $C$ may change from line to line. Unless additional
dependence is stated, they depend only on the data
$n,p,s,\lambda,\Lambda$.

\subsection{Tail spaces and tail quantities}

We use the weighted tail space
\begin{equation}\label{eq:tail-space}
L^{p-1}_{sp}(\R^n)
:=
\left\{
v\in L^{p-1}_{\mathrm{loc}}(\R^n):
\|v\|_{L^{p-1}_{sp}(\R^n)}^{p-1}
:=
\int_{\R^n}
\frac{|v(y)|^{p-1}}{1+|y|^{n+sp}}\,dy
<\infty
\right\}.
\end{equation}
For a measurable function $v$, a bounded interval $J$ with $|J|>0$,
$x_0\in\R^n$, and $R>0$, we define
\begin{equation}\label{eq:tail-def}
\operatorname{Tail}_{p-1}(v;x_0,R,J)
:=
\left[
\frac{R^{sp}}{|J|}
\int_J\int_{\R^n\setminus B_R(x_0)}
\frac{|v(y,t)|^{p-1}}{|y-x_0|^{n+sp}}\,dy\,dt
\right]^{1/(p-1)}.
\end{equation}
In particular, the negative tail is obtained by taking $v=u_-$.

\subsection{Weak solutions}

Let $I\subset\R$ be an interval. The kernel
$K:\R^n\times\R^n\times I\to[0,\infty]$ is assumed to be measurable and,
for almost every $(x,y,t)$ with $x\ne y$, to satisfy
\begin{equation}\label{eq:kernel}
K(x,y,t)=K(y,x,t),
\qquad
\lambda |x-y|^{-N}
\le K(x,y,t)
\le \Lambda |x-y|^{-N}.
\end{equation}

The associated operator is
\begin{equation}\label{eq:operator}
\mathcal L_t v(x)
:={\rm P.V.}\int_{\mathbb R^n}
F\bigl(v(x)-v(y)\bigr)K(x,y,t)\,dy.
\end{equation}

The equation under consideration is
\begin{equation}\label{eq:main-equation}
\partial_tu+\mathcal L_tu=0.
\end{equation}

For functions for which the expression is finite, we define
\begin{equation}\label{eq:energy}
\mathcal{E}_t(v,\varphi)
:=
\frac12
\iint_{\R^n\times\R^n}
F\bigl(v(x)-v(y)\bigr)
\bigl(\varphi(x)-\varphi(y)\bigr)
K(x,y,t)\,dx\,dy.
\end{equation}
By symmetry of $K$, this normalisation makes $\mathcal E_t(v,\varphi)$
the variational pairing of $\mathcal L_t v$ with $\varphi$.

\begin{definition}[Local weak solution]\label{def:weak}
Let $\Omega\subset\R^n$ be open and let $I\subset\R$ be an open interval.
A measurable function $u:\R^n\times I\to\R$ with
\begin{equation}\label{eq:weak-regularity}
u\in
C_{\mathrm{loc}}\bigl(I;L^2_{\mathrm{loc}}(\Omega)\bigr)
\cap
L^p_{\mathrm{loc}}
\bigl(I;W^{s,p}_{\mathrm{loc}}(\Omega)\bigr),
\end{equation}
is a local weak solution of \eqref{eq:main-equation} in $\Omega\times I$ if 
for every compact set $\mathcal K\subset\Omega$ and every subinterval
$[t_1,t_2]\Subset I$,
\begin{equation}\label{eq:weak-tail-condition}
\int_{t_1}^{t_2}\int_{\R^n}
\frac{|u(x,t)|^{p-1}}{1+|x|^{n+sp}}\,dx\,dt
<\infty
\end{equation}
and
\begin{equation}\label{eq:weak-form}
\left.
\int_{\mathcal K}u(x,t)\varphi(x,t)\,dx
\right|_{t=t_1}^{t=t_2}
-
\int_{t_1}^{t_2}\int_{\mathcal K}
u\,\partial_t\varphi\,dx\,dt
+
\int_{t_1}^{t_2}\mathcal{E}_t(u,\varphi)\,dt
=0
\end{equation}
for every testing function
\begin{equation}\label{eq:weak-test-space}
\varphi\in
W^{1,2}_{\mathrm{loc}}\bigl(I;L^2(\mathcal K)\bigr)
\cap
L^p_{\mathrm{loc}}\bigl(I;W^{s,p}_0(\mathcal K)\bigr).
\end{equation}
\end{definition}

A function satisfying \eqref{eq:weak-regularity} and
\eqref{eq:weak-tail-condition} is a local weak supersolution if the left-hand
side of \eqref{eq:weak-form} is nonnegative for every nonnegative testing
function in \eqref{eq:weak-test-space}.

\subsection{Main theorem}

Whenever a locally bounded weak solution is evaluated at a point, we
use its locally continuous representative, whose existence follows
from Theorem \ref{thm:liao-modulus}. The same convention applies in
Appendix \ref{app:linear}, where the corresponding continuity result
is used at $p=2$.

For the main theorem, we use the following intrinsic parameters.
Given $r>0$ and a point $(x_0,t_0)$ with $u(x_0,t_0)>0$, we set
\[
k:=u(x_0,t_0),\qquad
\tau:=k^{2-p}r^{sp},\qquad
J:=(t_0-\tau,t_0+2\tau).
\]

\begin{theorem}[Intrinsic Harnack inequality]\label{thm:main}
Assume $p>2$, $s\in(0,1)$, and \eqref{eq:kernel}. Let $u$ be a locally
bounded local weak solution of \eqref{eq:main-equation} with
$u(x_0,t_0)>0$. With the intrinsic parameters $k,\tau,J$ above,
we assume that the solution domain contains
\[
    B_{4r}(x_0)\times(t_0-2\tau,t_0+3\tau).
\]

We further assume that, for some $R\ge4r$,

\[
    u\ge0
    \quad\text{a.e. in}\quad
    B_R(x_0)\times(t_0-2\tau,t_0+3\tau).
\]

Then
\begin{equation}\label{eq:main-harnack}
u(x_0,t_0)
\le
C\,
\operatorname*{ess\,inf}_{B_r(x_0)\times(t_0,t_0+2\tau)}u
+
C
\left(\frac rR\right)^{\frac{sp}{p-1}}
\operatorname{Tail}_{p-1}(u_-;x_0,R,J),
\end{equation}
where $C$ depends only on $n,p,s,\lambda,\Lambda$.
\end{theorem}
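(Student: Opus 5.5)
Normalise first, then argue by a measure-to-pointwise mechanism driven by a nonlocal barrier. Translate so that $(x_0,t_0)=(0,0)$. Use the intrinsic scaling $v(x,t):=k^{-1}u(rx,\,k^{2-p}r^{sp}t)$. Then $v$ solves an equation of the same type with the same $\lambda,\Lambda$. It satisfies $v(0,0)=1$, is nonnegative in $B_{R/r}\times(-2,3)$, and the tail term becomes $(r/R)^{sp/(p-1)}k^{-1}\operatorname{Tail}_{p-1}(u_-;0,R,J)$. Set $\varepsilon:=(r/R)^{sp/(p-1)}k^{-1}\operatorname{Tail}_{p-1}(u_-;0,R,J)$. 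If $\varepsilon\ge c_0$ for a small data constant $c_0$, then \eqref{eq:main-harnack} is trivial with $C=1/c_0$. So assume $\varepsilon\le c_0$. The goal is then $\operatorname*{ess\,inf}_{B_1\times(0,2)}v\ge\eta$ for some data constant $\eta>0$.

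\textbf{Step 1: from the point value to a measure estimate in the past.} Apply the local boundedness estimate with time-integrated tail (the natural-energy $L^\infty$ bound, as in \cite{KarTewary,KumagaiWangZhang}) on backward cylinders $Q^-_\rho((0,0);\theta)$ inside $B_2\times(-1,0]$, with $v(0,0)=1$. Argue by contradiction as in DiBenedetto's proof: consider $\sup_{Q_\rho}(1-\rho)^{\beta}v$ and use the intrinsic choice of waiting time. This yields a cylinder $B_{\rho_0}(\bar x)\times(\bar t-\theta\rho_0^{sp},\bar t]\subset B_1\times(-1,0]$, with $\rho_0,\theta$ depending on the data, on which $v\le M$ and $|\{v\ge 1/2\}\cap B_{\rho_0}(\bar x)|\ge\nu|B_{\rho_0}|$ at a time level.

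A simpler alternative, which I would try first, uses the continuous representative (Theorem \ref{thm:liao-modulus}) together with the $L^\infty$ bound applied to $(1-v)_+$-type quantities. That suffices to show that $v\ge1/2$ on a set of positive measure in some $B_{\rho_0}\times\{\bar t\}$ with $\bar t\le 0$. Using the $L^1$-type (weak) estimate $\int_{B_1}v(\cdot,t)\,dx\ge c$ for $t$ near $0$ instead of pointwise continuity avoids dependence on the modulus. This is obtained by testing the equation with a cutoff and integrating backward/forward, controlling the nonlocal term by the tail and $\operatorname{ess\,sup}$.

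\textbf{Step 2: nonlocal barrier giving immediate forward positivity.} This is the heart of the argument and the main obstacle. The mass of $v$ on $E:=\{v(\cdot,\bar t)\ge1/2\}\cap B_{\rho_0}$ has to be propagated to all of $B_1\times(0,2)$ without a waiting time. The mechanism is nonlocality. For $x\in B_2\setminus B_{2\rho_0}$, the operator applied to a function that is small in $B_2$ sees a favourable contribution $-\int_E F(\psi(x)-v(y))K\,dy\le -c\,\lambda|E|$. I would construct a subsolution $\psi(x,t)=\eta\,\big(\phi(x)-\varepsilon'\big)$ with $\phi$ a smooth bump equal to $1$ on $B_1$ and supported in $B_{3/2}$, multiplied by a time factor $\min\{1,(t-\bar t)/\delta\}$. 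Outside $B_2$ set $\psi=-(\text{negative part of }v)$, and on $E$ take $\psi=\min\{v,1/2\}\ge$ the comparison values.

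The positive jump contribution from $E$ is of order $|E|$ uniformly in $B_{3/2}$. It dominates three things: $\eta^{p-1}\|\mathcal L\phi\|_\infty$, the negative-tail contribution (bounded by $C\varepsilon^{p-1}\le Cc_0^{p-1}$), and $\partial_t\psi\sim\eta/\delta$. This requires $\eta$ and $c_0$ small; the degenerate exponent $p>2$ makes $\eta^{p-1}\ll\eta$ harmless only for the $\mathcal L\phi$ term, so the time-derivative is handled by the time-dependent prefactor growing slowly.

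The key technical point is to prove the variational comparison principle $v\ge\psi$ in $B_{3/2}\times(\bar t,2]$. To do this, test the difference equation with $(\psi-v)_+$ cut off in $B_{3/2}$. The monotonicity of $F$ handles the local part, and the sign information handles the exterior, using $v\ge0$ in $B_{R/r}$ and the tail beyond. Because $E$ lives at time $\bar t$ only, I would first extend the positivity of $E$ over a short interval $(\bar t,\bar t+\delta_0)$ via a De Giorgi-type measure-retention lemma derived from energy estimates. Then the barrier runs from $\bar t$ through $(0,2)$. Since $\bar t<0$ lies in the history interval, no extra gap after $t_0$ is needed.

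\textbf{Step 3: conclusion.} The comparison gives $v\ge\eta$ on $B_1\times(0,2)$. Scaling back yields $u\ge\eta k$ there, so $u(x_0,t_0)\le\eta^{-1}\operatorname*{ess\,inf}u$ in the case $\varepsilon\le c_0$. Combining this with the trivial case gives \eqref{eq:main-harnack}. The expected difficulty is making the barrier a legitimate weak subsolution under only $L^1$-in-time tails. I would handle this by keeping the tail inside the time-integrated comparison energy identity rather than pointwise in $t$.
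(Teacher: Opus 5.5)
Your outline matches the paper's architecture: normalise, dispose of the large-tail case, produce positivity in the history interval, push it forward with a nonlocal barrier, and keep the tail in time-integrated form. However, two steps would not go through as written.

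\textbf{Step 2: persistence.} This is the decisive gap. Your $\psi$ is a subsolution only while the favourable jump term from $E$ dominates $\eta^{p-1}\mathcal L\phi$, the tail and $\partial_t\psi$. Yet you only retain $v\ge1/2$ on $E$ for $t\in(\bar t,\bar t+\delta_0)$, with $\delta_0$ short. Afterwards $\psi$ is time-independent off $E$, and nothing compensates $\mathcal L\psi$: a stationary bump is not a subsolution, since $\mathcal L\phi>0$ on $\{\phi=1\}$. So the comparison fails on $(\bar t+\delta_0,2]$, which contains most of $(0,2)$.

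The missing idea is the degenerate slow clock. A barrier of height $B$ evolves in the time variable $h(t)=\int_{-1}^t b^{p-2}$, whose total length up to $t=2$ is at most $3B^{p-2}$, small for small $B$. The paper builds $w=b(t)\Psi$ with $\Psi(\cdot,t)=\Phi(\cdot,h(t))$, where $\Phi$ is the energy solution of \eqref{eq:Phi-problem} started from a cutoff. Then $\partial_tw+\mathcal L_tw=f\Psi$ holds exactly. The bound $\Phi\ge c_1$ on $B_1$ for clock times below $\Theta_*$ comes from Lemma \ref{lem:APT-forward} via Lemma \ref{lem:persistence}. The truncation error $C_0B^{p-1}$, integrated over length $3$, is negligible against $c_1B$ for the same reason. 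A Dirichlet solution instead of an explicit $\phi$ also avoids $\|\mathcal L\phi\|_\infty$, which need not be finite for merely measurable kernels when $sp\ge p-1$.

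\textbf{Step 1: positivity from the point value.} This step is not justified either. The modulus of continuity and the De Giorgi-type local bounds you invoke depend on $\operatorname*{ess\,sup}$ and on the tail of $v_+$. But $v$ is completely uncontrolled on $B_{\mathcal R}\setminus B_1$, and if that positive tail is large the local estimates give nothing. Even locally, a DiBenedetto-type selection yields a radius tied to the unknown local maximum, not one depending only on the data. Likewise, $\int_{B_1}v(\cdot,t)\,dx\ge c$ does not follow from a point value by testing with a cutoff. You would need an $L^\infty$--$L^1$ bound, which again carries the positive tail.

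The paper turns this obstacle into the mechanism. It runs a stopping-time doubling with $r_j=cM_j^{-1/N}$. By \eqref{eq:rescaled-tail-moment-bound}, the rescaled tail at the stopping point is bounded by $C_c$ times the global weighted moment $\int_{-1}^0\int_{\R^n}|u|^{p-1}\omega$. So either this moment is already large, or Lemma \ref{lem:continuity} applies. In the latter case $u\ge c_0M$ on an intrinsic cylinder contributing of order $Mr^N=c^N$ to the moment (Proposition \ref{prop:moment}, Corollary \ref{cor:positive-moment}). Correspondingly, Proposition \ref{prop:forced-comparison} extracts its gain from the interaction of $B_3$ with $\{U>2B\}$ anywhere in $\R^n$. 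The forcing $f\le\kappa[M_+-DB^{p-1}]_+$ is spread over $(-1,0)$. A barrier fed only by a local set $E$ at a single time cannot exploit positivity located far away.
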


\begin{remark}
\label{rem:no-forward-gap}
The forward comparison has no additional time gap.
The interval in \eqref{eq:main-harnack} begins at $t_0$, rather than
at $t_0+\tau$. 
\end{remark}

\begin{corollary}[Elliptic-type Harnack inequality]
\label{cor:elliptic-type}
Under the assumptions of Theorem \ref{thm:main}, with
\[
k=u(x_0,t_0)>0,\qquad
\tau=k^{2-p}r^{sp},\qquad J=(t_0-\tau,t_0+2\tau),
\]
we have the same-time spatial estimate
\begin{equation}\label{eq:same-time-harnack}
u(x_0,t_0)
\le C\inf_{B_r(x_0)}u(\cdot,t_0)
+C\left(\frac rR\right)^{sp/(p-1)}
\operatorname{Tail}_{p-1}(u_-;x_0,R,J).
\end{equation}
Here $C$ depends only on $n,p,s,\lambda,\Lambda$. In particular, if
$u\ge0$ on $\R^n\times(t_0-2\tau,t_0+3\tau)$, then
\[
u(x_0,t_0)\le C\inf_{B_r(x_0)}u(\cdot,t_0).
\]
\end{corollary}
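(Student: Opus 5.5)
The corollary should follow from Theorem \ref{thm:main} by applying the theorem at the \emph{same} time level but at \emph{different} base points, using that the forward interval in \eqref{eq:main-harnack} starts at $t_0$ itself (Remark \ref{rem:no-forward-gap}). Since $u$ is locally continuous, $\inf_{B_r(x_0)}u(\cdot,t_0)$ is a genuine infimum. For each $y\in B_r(x_0)$ close to $t_0$ in time, the point $(y,t_0+\varepsilon)$ lies in $B_r(x_0)\times(t_0,t_0+2\tau)$. Hence \eqref{eq:main-harnack} gives
\[
u(x_0,t_0)\le C\,u(y,t_0+\varepsilon)+C\left(\frac rR\right)^{\frac{sp}{p-1}}\operatorname{Tail}_{p-1}(u_-;x_0,R,J)
\]
for every $\varepsilon\in(0,2\tau)$. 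Here I use that the essential infimum of a continuous function on an open set is bounded by its pointwise values.

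The plan is then to let $\varepsilon\downarrow0$. Local continuity of $u$, from Theorem \ref{thm:liao-modulus}, gives $u(y,t_0+\varepsilon)\to u(y,t_0)$ for each $y\in B_r(x_0)$. The right-hand side carries no other $\varepsilon$-dependence, since $k,\tau,J$ are fixed by $(x_0,t_0)$. Taking the infimum over $y\in B_r(x_0)$ yields \eqref{eq:same-time-harnack} with the same constant $C$.

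For the final assertion, assume $u\ge0$ on $\R^n\times(t_0-2\tau,t_0+3\tau)$. Then $u_-\equiv0$ on $\R^n\times J$, so the tail term vanishes for any admissible $R\ge4r$ (for instance $R=4r$). Note that $J\subset(t_0-2\tau,t_0+3\tau)$.

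The only point needing care is the continuity step. One must make sure that the continuous representative agrees with $u$ almost everywhere, and that the pointwise value at $(y,t_0)$ is the limit of values at times $t_0+\varepsilon$. This holds because $(y,t_0)$ lies in the interior of the solution domain, which contains $B_{4r}(x_0)\times(t_0-2\tau,t_0+3\tau)$, and $u$ is locally bounded there, so Theorem \ref{thm:liao-modulus} applies. I do not expect any genuine obstacle: all the difficulty sits in Theorem \ref{thm:main}, and specifically in the absence of a forward time gap.
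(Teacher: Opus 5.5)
Your proposal is correct and follows the paper's proof. You bound the essential infimum over $B_r(x_0)\times(t_0,t_0+2\tau)$ by pointwise values of the continuous representative at times $t_0+\varepsilon$, let $\varepsilon\downarrow0$, take the infimum over $B_r(x_0)$, and note that the tail vanishes under global nonnegativity. One small point: your opening phrase about applying the theorem at ``different base points'' misdescribes your own argument, since you, like the paper, apply Theorem \ref{thm:main} only once, at $(x_0,t_0)$.
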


\begin{proof}
Let $m$ be the essential infimum over the forward cylinder in
\eqref{eq:main-harnack}. By continuity, $u(x,t)\ge m$ for every
$x\in B_r(x_0)$ and $t\in(t_0,t_0+2\tau)$. Letting $t\downarrow t_0$
gives $m\le\inf_{B_r(x_0)}u(\cdot,t_0)$. The conclusion follows from
\eqref{eq:main-harnack}. Global nonnegativity makes the tail term vanish.
\end{proof}

\begin{corollary}[Globally nonnegative solutions]\label{cor:global}
Under the assumptions of Theorem \ref{thm:main}, if

\[
    u\ge0
    \quad\text{on }\R^n\times(t_0-2\tau,t_0+3\tau),
\]

then
\begin{equation}\label{eq:global-harnack}
u(x_0,t_0)
\le
C\,
\operatorname*{ess\,inf}_{B_r(x_0)\times(t_0,t_0+2\tau)}u.
\end{equation}
\end{corollary}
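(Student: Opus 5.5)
This corollary follows directly from Theorem \ref{thm:main}; the only work is to make the tail term vanish. The hypotheses of Theorem \ref{thm:main} are in force. The global assumption $u\ge0$ on $\R^n\times(t_0-2\tau,t_0+3\tau)$ in particular gives $u\ge0$ a.e. on $B_R(x_0)\times(t_0-2\tau,t_0+3\tau)$ for any $R\ge4r$. Fix, for instance, $R=4r$. Then \eqref{eq:main-harnack} holds with a constant $C$ depending only on $n,p,s,\lambda,\Lambda$.

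Next I check that the tail term is zero. The interval $J=(t_0-\tau,t_0+2\tau)$ is contained in $(t_0-2\tau,t_0+3\tau)$. On $\R^n\times J$ we therefore have $u_-=0$ everywhere, and in particular a.e. on $(\R^n\setminus B_R(x_0))\times J$. By the definition \eqref{eq:tail-def}, the integrand $|u_-(y,t)|^{p-1}|y-x_0|^{-n-sp}$ vanishes a.e. on the domain of integration. Hence $\operatorname{Tail}_{p-1}(u_-;x_0,R,J)=0$.

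Inserting this into \eqref{eq:main-harnack} gives \eqref{eq:global-harnack} with the same constant $C$, which depends only on the data.

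There is essentially no obstacle here. The only point to note is that the tail is evaluated on $J$, and $J$ lies within the time interval where global nonnegativity is assumed.
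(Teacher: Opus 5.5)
Your proposal is correct and matches the paper's reasoning, which is the one-line observation (stated in the proof of Corollary \ref{cor:elliptic-type}) that global nonnegativity on $\R^n\times(t_0-2\tau,t_0+3\tau)\supset\R^n\times J$ makes $\operatorname{Tail}_{p-1}(u_-;x_0,R,J)$ vanish in \eqref{eq:main-harnack}. Your explicit check that $J$ lies inside the nonnegativity interval, and your choice $R=4r$ (any admissible $R$ works), are both fine.
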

\begin{remark}
The local boundedness argument in
\cite[Theorem~1.4]{KarTewary}, developed on the Heisenberg group,
also applies in the Euclidean setting, with the corresponding
fractional Sobolev and interpolation inequalities. For $p>2$, the
local quantities in their estimate are finite under
\eqref{eq:weak-regularity}, while the time-integrated tail is finite
by \eqref{eq:weak-tail-condition}. Applying this argument to $u$ and
$-u$, we see that every local weak solution in the sense of
Definition \ref{def:weak} is locally essentially bounded. Thus the
local boundedness assumption in Theorem \ref{thm:main} is automatic;
no additional local integrability or supremum-in-time tail bound
is required.
\end{remark}
\begin{example}[A local travelling wave]\label{ex:local-p-wave}
The absence of an additional forward time gap has no analogue for
the local parabolic $p$-Laplacian when $p>2$, even if the solution
exists for all times. Set
\[
q:=\frac{p-1}{p-2},\qquad
v(x,t):=q^{-q}(x_1+t)_+^q.
\]
Then $v_t=v_{x_1}$ and
$|v_{x_1}|^{p-2}v_{x_1}=v$, so
\[
v_t-\operatorname{div}(|Dv|^{p-2}Dv)=0
\quad\text{in }\R^n\times\R.
\]
These identities hold weakly across $x_1+t=0$ as well, since $q>1$
and both $v$ and its flux are continuous there.
Choose $x_0=d e_1$, where $e_1$ is the first coordinate vector and
$0<d<r$, and put $t_0=0$. We have $v(x_0,0)>0$. For each
$0<t<r-d$, the set $B_r(x_0)\cap\{x_1<-t\}$ has positive measure,
and $v(\cdot,t)$ vanishes there. Hence, with
$\tau=v(x_0,0)^{2-p}r^p$,
\[
\operatorname*{ess\,inf}_{B_r(x_0)\times(0,2\tau)}v=0.
\]
Thus the local analogue of the forward comparison without a time
gap fails, despite the availability of an arbitrarily long backward
history. The scale $v(x_0,0)^{2-p}r^p$ remains the intrinsic scale
of the classical Harnack theory \cite{DGVActa,DGV}; its positive
waiting time cannot in general be removed. Example
\ref{ex:local-heat} gives a separate obstruction for the local heat
equation.
\end{example}

% =============================================================================

\section{Preliminaries}

\subsection{A scalar inequality}

We begin with an inequality for $F$.

\begin{lemma}\label{lem:scalar}
For $p\ge2$ and $a\ge b$,
\begin{equation}\label{eq:scalar}
F(a)-F(b)
\ge
2^{2-p}(a-b)^{p-1}.
\end{equation}
\end{lemma}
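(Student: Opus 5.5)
We prove \eqref{eq:scalar} by writing $F(a)-F(b)$ as an integral of $F'$ and bounding $F'$ from below along the segment from $b$ to $a$. Since $F(t)=|t|^{p-2}t$ is $C^1$ for $p\ge2$ with $F'(t)=(p-1)|t|^{p-2}$, we have
\[
F(a)-F(b)=(p-1)\int_b^a|t|^{p-2}\,dt .
\]
If $a=b$ both sides of \eqref{eq:scalar} vanish, so we assume $a>b$.

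The key step is a lower bound for $\int_b^a|t|^{p-2}\,dt$ in terms of $a-b$. The function $|t|^{p-2}$ is symmetric, convex for $p\ge3$, and increasing in $|t|$ in every case. Hence among all intervals of length $L:=a-b$ the integral is smallest for the interval centred at the origin, $[-L/2,L/2]$. This follows from a rearrangement argument: an interval of length $L$ contains a set whose distance to the origin dominates that of $[-L/2,L/2]$ point by point after a measure-preserving shift. One can also argue directly. If $a$ and $b$ have the same sign, the interval lies on one side of $0$ and dominates $[0,L]$. If $b<0<a$, split the integral at $0$, use that $x^{p-1}+y^{p-1}$ is convex in $(x,y)$, and minimise under the constraint $x+y=L$.

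For the centred interval we get
\[
\int_{-L/2}^{L/2}|t|^{p-2}\,dt=\frac{2}{p-1}\Bigl(\frac L2\Bigr)^{p-1}=\frac{2^{2-p}}{p-1}L^{p-1}.
\]
Therefore $F(a)-F(b)\ge 2^{2-p}(a-b)^{p-1}$, which is \eqref{eq:scalar}.

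The only point needing any care is the minimisation in the sign-changing case. Writing $x=a$ and $y=-b$, we need
\[
x^{p-1}+y^{p-1}\ge 2\Bigl(\frac{x+y}{2}\Bigr)^{p-1},
\]
which is exactly convexity of $t\mapsto t^{p-1}$ on $[0,\infty)$ for $p\ge2$, i.e.\ Jensen's inequality. In the same-sign case, $\int_b^a|t|^{p-2}\,dt\ge\int_0^L t^{p-2}\,dt=L^{p-1}/(p-1)$, which is even better than the required bound since $2^{2-p}\le1$.
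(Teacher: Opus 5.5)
Your proof is correct and complete. The paper states this lemma without proof, treating it as the standard monotonicity inequality for $F$, so there is no argument of the paper's to compare yours against.

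The representation $F(a)-F(b)=(p-1)\int_b^a|t|^{p-2}\,dt$ is legitimate, since $F\in C^1(\R)$ for $p\ge2$.

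\begin{itemize}
\item In the same-sign case, shifting the interval to $[0,a-b]$ gives $F(a)-F(b)\ge(a-b)^{p-1}$, which is stronger than needed. The endpoint cases $a=0$ or $b=0$ fall in this case.
\item In the sign-changing case, $F(a)-F(b)=a^{p-1}+|b|^{p-1}\ge 2^{2-p}(a-b)^{p-1}$ is exactly Jensen's inequality for $t\mapsto t^{p-1}$.
\end{itemize}

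Your Jensen step also shows the constant is sharp: equality holds when $b=-a$.

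The one thing I would cut is the rearrangement paragraph. Its justification via a ``measure-preserving shift'' is imprecise. It is also unnecessary, because your two-case analysis already covers every configuration of $a$ and $b$.
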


\subsection{A weak comparison principle}

We record the following comparison principle used below. 

\begin{lemma}[Weak comparison]\label{lem:weak-comparison}
Let $p\ge2$, let $D\subset\R^n$ be a bounded open set, and let
$a<b$. Set $V:=W^{s,p}_0(D)$ and $p':=p/(p-1)$, and assume that
$K$ satisfies \eqref{eq:kernel}. Let $v,w:\R^n\times[a,b]\to\R$
and constants $c_v\le c_w$ satisfy
\[
v-c_v,\ w-c_w\in L^p((a,b);V),
\qquad
v|_D,\ w|_D\in C([a,b];L^2(D)),
\]
and
\[
\partial_t(v|_D),\ \partial_t(w|_D)
\in L^{p'}((a,b);V^*).
\]
In particular, $v=c_v\le c_w=w$ almost everywhere on
$(\R^n\setminus D)\times(a,b)$.
Suppose that $v(\cdot,a)\le w(\cdot,a)$ almost everywhere in $D$ and
that, for almost every $t\in(a,b)$ and every nonnegative $\varphi\in V$,
\begin{equation}\label{eq:weak-comparison-assumption}
\langle\partial_t(v-w),\varphi\rangle_{V^*,V}
+\mathcal E_t(v,\varphi)-\mathcal E_t(w,\varphi)\le0.
\end{equation}
Then $v(\cdot,t)\le w(\cdot,t)$ almost everywhere in $D$ for every
$t\in[a,b]$.
\end{lemma}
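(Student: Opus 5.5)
We prove the statement with the standard energy argument for the positive part of the difference, $\eta:=(v-w)_+$, together with Lemma \ref{lem:scalar}. The plan has three steps.

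\textbf{Step 1: $\eta$ is an admissible test function.} Since $v-c_v$ and $w-c_w$ lie in $L^p((a,b);V)$, we have $v-w=(v-c_v)-(w-c_w)+(c_v-c_w)$. Outside $D$ this equals $c_v-c_w\le0$. Hence $\eta=0$ a.e. on $(\R^n\setminus D)\times(a,b)$.

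Truncation $f\mapsto f_+$ is a contraction on $W^{s,p}(\R^n)$. It follows that $\eta(\cdot,t)\in V$ for a.e. $t$, and that $\eta\in L^p((a,b);V)$ with $\eta\ge0$. We may therefore take $\varphi=\eta(\cdot,t)$ in \eqref{eq:weak-comparison-assumption} for a.e. $t$ and integrate over $(a,t_1)$.

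\textbf{Step 2: the time term.} Set $z:=(v-w)|_D$. We use the chain rule in the Gelfand triple $V\subset L^2(D)\subset V^*$ for the positive part. For
\[
z\in W:=\{z\in L^p((a,b);V_{\mathrm{aff}}):\ \partial_t z\in L^{p'}((a,b);V^*)\},
\]
the map $t\mapsto\frac12\|z_+(t)\|_{L^2(D)}^2$ is absolutely continuous, and
\[
\frac{d}{dt}\,\frac12\|z_+(t)\|_{L^2(D)}^2=\langle\partial_t z,z_+\rangle .
\]
One subtlety is that $z$ itself need not lie in $L^p(V)$, since it equals $c_v-c_w$ off $D$. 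Only $z_+$ is needed, and $z_+\in L^p(V)$. I would justify the identity by Steklov averaging, or by mollifying in time and approximating $r\mapsto r_+$ by smooth convex functions $G_\varepsilon$ with $G_\varepsilon'$ bounded and Lipschitz. One then passes to the limit using $z\in C([a,b];L^2(D))$. This justification of the chain rule is the main technical obstacle; everything else is algebraic. Since $z_+(a)=0$, we obtain
\[
\frac12\|\eta(t_1)\|_{L^2(D)}^2+\int_a^{t_1}\bigl(\mathcal E_t(v,\eta)-\mathcal E_t(w,\eta)\bigr)\,dt\le0 .
\]

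\textbf{Step 3: the energy term is nonnegative.} Put $A:=v(x)-v(y)$, $B:=w(x)-w(y)$ and $h:=v-w$, so that $A-B=h(x)-h(y)$. The integrand of the energy difference is
\[
\bigl(F(A)-F(B)\bigr)\bigl(h_+(x)-h_+(y)\bigr)K .
\]
The function $r\mapsto r_+$ is nondecreasing, so $h_+(x)-h_+(y)$ has the same sign as $h(x)-h(y)=A-B$, or vanishes. The function $F$ is increasing, so $F(A)-F(B)$ also has the sign of $A-B$. Hence the integrand is $\ge0$ pointwise.

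Quantitatively, Lemma \ref{lem:scalar} gives the lower bound $2^{2-p}|A-B|^{p-1}|h_+(x)-h_+(y)|\ge 2^{2-p}|h_+(x)-h_+(y)|^p$, although only nonnegativity is needed here. The integrals are finite by H\"older's inequality: $F(v(x)-v(y))\in L^{p'}$ with respect to the measure $K\,dx\,dy$ where it is paired against $\eta\in V$. Here one uses that $v-c_v\in V$, so constants do not affect the differences.

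\textbf{Conclusion.} Combining the three steps gives $\|\eta(t_1)\|_{L^2(D)}=0$ for every $t_1\in[a,b]$. This is continuity in $L^2$, which makes the statement hold at every time and not merely almost every time. Thus $v(\cdot,t)\le w(\cdot,t)$ a.e. in $D$ for all $t\in[a,b]$.
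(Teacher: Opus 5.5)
Your proposal is correct and follows essentially the same route as the paper: you test with $(v-w)_+$, justify the $L^2$ chain rule by time regularisation, and use the monotonicity of $F$ and of $r\mapsto r_+$ to show that the energy difference is nonnegative. One point in Step 1 is worth tightening: when $c_v<c_w$ one has $v-w\notin W^{s,p}(\R^n)$, so the contraction property cannot be applied to $v-w$ directly; the paper instead writes $(v-w)_+=(q-(c_w-c_v))_+$ with $q:=(v-c_v)-(w-c_w)\in L^p((a,b);V)$, which makes both the membership in $V$ and the chain rule (applied to $q$) precise.
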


\iffalse 
\begin{proof}
Set $d:=v-w$ and $z:=d_+$. Since $c_v\le c_w$, the zero extension
of $z|_D$ agrees with $z$ and
\[
z\in L^p((a,b);V)\cap C([a,b];L^2(D)).
\]
Indeed, if $q:=(v-c_v)-(w-c_w)$, then $q\in L^p((a,b);V)$ and
$z=(q-(c_w-c_v))_+$, a truncation that preserves $V$.
The energy-space chain rule for this truncation, justified by time
regularisation, allows us to test \eqref{eq:weak-comparison-assumption}
with $z$ and integrate to obtain
\begin{equation}\label{eq:weak-comparison-energy}
\frac12\|z(t)\|_{L^2(D)}^2
+\int_a^t\bigl[\mathcal E_\tau(v,z)-\mathcal E_\tau(w,z)\bigr]\,d\tau
\le\frac12\|z(a)\|_{L^2(D)}^2=0.
\end{equation}
For completeness, the chain rule applies to
$q\mapsto\frac12\|(q-(c_w-c_v))_+\|_{L^2(D)}^2$, since
$q\in L^p((a,b);V)$ and $\partial_tq\in L^{p'}((a,b);V^*)$.

For almost every $x,y$ and $\tau$, the quantities
\[
F(v(x)-v(y))-F(w(x)-w(y))
\quad\text{and}\quad z(x)-z(y)
\]
have the same sign whenever the latter is nonzero: both signs are
determined by $d(x)-d(y)$. Here we have suppressed the time variable.
Thus monotonicity of $F$ and of the positive-part map, together with
$K\ge0$, gives
\[
\mathcal E_\tau(v,z)-\mathcal E_\tau(w,z)\ge0.
\]
It follows from \eqref{eq:weak-comparison-energy} that $z(t)=0$;
time continuity gives the conclusion for every $t\in[a,b]$.
\end{proof}

\fi 
\subsection{A modulus of continuity}

For a locally bounded local weak solution $u$ in $\Omega\times I$ and
$Q^-_{\widetilde R}(z_0;1)\Subset\Omega\times I$, we write
\begin{equation}\label{eq:liao-omega}
\begin{split}
\omega
:={}&
2\operatorname*{ess\,sup}_{Q^-_{\widetilde R}(z_0;1)}|u|
\\
&+
\int_{t_0-\widetilde R^{sp}}^{t_0}
\int_{\R^n\setminus B_{\widetilde R}(x_0)}
\frac{|u(y,t)|^{p-1}}{|y-x_0|^{N}}\,dy\,dt.
\end{split}
\end{equation}
We record the following result from \cite[Theorem~1.1]{LiaoModulus}.

\begin{theorem}\label{thm:liao-modulus}
Let $u$ and $Q^-_{\widetilde R}(z_0;1)$ be as above, with $\omega$
given by \eqref{eq:liao-omega}. There exist constants $C>1$ and
$\beta,\sigma\in(0,1)$, depending only on the data, with the following
property.
If $\omega>0$, $0<r<R<\widetilde R$, and
\[
Q^-_R(z_0;\omega^{2-p})
\subset Q^-_{\widetilde R}(z_0;1),
\]
then
\begin{equation}\label{eq:liao-modulus}
\begin{split}
\operatorname*{ess\,osc}_{Q^-_{\sigma r}(z_0;\omega^{2-p})}u
\le{}&
2\omega\left(\frac rR\right)^\beta
\\
&+C
\int_{t_0-\omega^{2-p}(rR)^{sp/2}}^{t_0}
\int_{\R^n\setminus B_{\widetilde R}(x_0)}
\frac{|u(y,t)|^{p-1}}{|y-x_0|^N}\,dy\,dt.
\end{split}
\end{equation}
In particular, $u$ has a continuous representative in $\Omega\times I$.
\end{theorem}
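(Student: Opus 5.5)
Theorem \ref{thm:liao-modulus} is quoted from \cite[Theorem~1.1]{LiaoModulus}, so the proof has two parts. First I would check that the hypotheses there match ours. Then I would deduce the ``in particular'' statement about the continuous representative from \eqref{eq:liao-modulus}.

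\textbf{Matching the hypotheses.} I would compare the notion of local weak solution in \cite{LiaoModulus} with Definition \ref{def:weak}. The points to check are:
\begin{itemize}
\item the energy regularity \eqref{eq:weak-regularity};
\item the time-integrated tail condition \eqref{eq:weak-tail-condition}, which is precisely the $L^1$-in-time tail assumption under which that paper works;
\item the class of test functions \eqref{eq:weak-test-space};
\item the kernel bounds \eqref{eq:kernel} with time dependence allowed.
\end{itemize}
I also need to align the normalisations. These are the factor $\tfrac12$ in $\mathcal E_t$, the intrinsic scaling $\omega^{2-p}$ of the cylinders, and the definition of $\omega$ in \eqref{eq:liao-omega} as twice the local supremum plus the unnormalised tail integral over $(t_0-\widetilde R^{sp},t_0)$. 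Any discrepancy only changes $C$, $\beta$ and $\sigma$ by factors depending on the data. Local boundedness is assumed, so $\omega<\infty$.

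\textbf{Continuous representative.} I would argue as follows.
\begin{itemize}
\item Fix a compact set $\mathcal K\subset\Omega\times I$. Choose $\widetilde R$ small, uniformly for $z_0\in\mathcal K$, so that $Q^-_{\widetilde R}(z_0;1)$ stays inside a fixed compact subset of $\Omega\times I$.
\item Then $\omega\le\omega_*$ uniformly in $z_0\in\mathcal K$, by local boundedness and \eqref{eq:weak-tail-condition}. If $\omega(z_0)=0$, then $u\equiv0$ near $z_0$ and there is nothing to prove. Otherwise $\omega^{2-p}\ge\omega_*^{2-p}$, so the cylinders $Q^-_{\sigma r}(z_0;\omega^{2-p})$ contain standard cylinders $Q^-_{\sigma r}(z_0;\omega_*^{2-p})$.
\item Fix $R<\widetilde R$ with $Q^-_R(z_0;\omega^{2-p})\subset Q^-_{\widetilde R}(z_0;1)$; this is possible by taking $R\le \widetilde R\min\{1,\omega^{(p-2)/(sp)}\}$.
\item Let $r\to0$. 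The first term in \eqref{eq:liao-modulus} tends to $0$. The second is the integral of $\int_{\R^n\setminus B_{\widetilde R}}|u|^{p-1}|y-x_0|^{-N}\,dy$, an $L^1$ function of $t$, over an interval of length $\omega^{2-p}(rR)^{sp/2}\to0$. By absolute continuity of the integral it tends to $0$, uniformly in $z_0\in\mathcal K$, since the integrand is dominated by a single $L^1$ function.
\end{itemize}
This gives a uniform modulus $\operatorname*{ess\,osc}_{Q^-_\rho(z_0)}u\le\eta(\rho)$ with $\eta(\rho)\to0$. A standard Lebesgue-point argument then shows that the averages of $u$ over shrinking cylinders converge locally uniformly. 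The limit defines a continuous function that agrees with $u$ almost everywhere.

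\textbf{Main obstacle.} I expect the delicate step to be the bookkeeping in the first part: making sure the cited result genuinely covers time-dependent measurable kernels and our weak formulation with only $L^1$-in-time tails. I would check this against the statement and standing assumptions of \cite{LiaoModulus}. If needed, I would note that the De Giorgi-type proof there uses only the energy estimates, and that those estimates follow from Definition \ref{def:weak} by the usual Steklov-averaging argument.
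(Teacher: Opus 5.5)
Your approach matches the paper's. Theorem~\ref{thm:liao-modulus} is not proved there; it is quoted from \cite[Theorem~1.1]{LiaoModulus}, continuity assertion included, so the hypothesis-matching you describe is all the paper relies on. Your derivation of the continuous representative goes beyond the paper and is sound in outline, but the uniform modulus $\eta(\rho)$ you claim is not yet justified. Your radius $R\le\widetilde R\min\{1,\omega^{(p-2)/(sp)}\}$ depends on $\omega=\omega(z_0)$, which has no positive lower bound on $\mathcal K$. As $\omega(z_0)\to0$ you are forced to take $R\to0$, and then neither $2\omega(r/R)^\beta$ nor the length $\omega^{2-p}(rR)^{sp/2}\le\widetilde R^{sp}(r/R)^{sp/2}$ of the tail interval is small uniformly in $z_0$. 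Your uniformity argument for the tail term handles the dominating $L^1$ function but not this interval length. This matters because \eqref{eq:liao-modulus} controls only backward cylinders, and a pointwise backward modulus does not give continuity: a Heaviside function of $t$ has eventually vanishing backward oscillation at every point.

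The repair is a case split. Since
\[
\operatorname*{ess\,osc}_{Q^-_{\widetilde R}(z_0;1)}u\le2\operatorname*{ess\,sup}_{Q^-_{\widetilde R}(z_0;1)}|u|\le\omega(z_0),
\]
points with $\omega(z_0)\le\varepsilon$ have oscillation at most $\varepsilon$ on every $Q^-_\rho(z_0;\omega_*^{2-p})\subset Q^-_{\widetilde R}(z_0;1)$. For $\omega(z_0)>\varepsilon$, take $R:=\tfrac12\widetilde R\min\{1,\varepsilon^{(p-2)/(sp)}\}$, which is independent of $z_0$. The inclusion $Q^-_R(z_0;\omega^{2-p})\subset Q^-_{\widetilde R}(z_0;1)$ still holds. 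The first term is at most $2\omega_*(r/R)^\beta$, and the tail interval has length at most $\varepsilon^{2-p}(rR)^{sp/2}$, so both are small uniformly in $z_0$. With this, your uniform-Cauchy argument for the cylinder averages and the Lebesgue-differentiation step go through. A minor point: $\omega(z_0)=0$ gives $u=0$ only on the backward cylinder, not on a neighbourhood of $z_0$, though that is all you actually use.
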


\subsection{Forward propagation of positivity}

We record the globally nonnegative version of the forward-in-time De Giorgi
lemma from \cite[Lemma~3.11]{APT}.

\begin{lemma}\label{lem:APT-forward}
Let $u$ be a local weak supersolution of \eqref{eq:main-equation} in a
neighbourhood of
\[
Q^+_{2\rho}(z_0;\theta),
\]
and assume that
\begin{equation}\label{eq:APT-global-nonnegative}
u\ge0
\quad\text{a.e. in }
\R^n\times(t_0,t_0+\theta(2\rho)^{sp}].
\end{equation}
Suppose that, for some $M>0$ and $\xi\in(0,1)$,
\begin{equation}\label{eq:APT-initial-positive}
u(\cdot,t_0)\ge\xi M
\quad\text{a.e. in }B_{2\rho}(x_0).
\end{equation}
For each $a\in(0,1)$, there exists $\nu_0\in(0,1)$, depending only on $a$ and
the data, such that if
\begin{equation}\label{eq:APT-small-measure}
\frac{
\bigl|\{u<\xi M\}\cap Q^+_{2\rho}(z_0;\theta)\bigr|
}{
\bigl|Q^+_{2\rho}(z_0;\theta)\bigr|
}
\le
\nu_0
\frac{(\xi M)^{2-p}}{\theta},
\end{equation}
then
\begin{equation}\label{eq:APT-positive-conclusion}
u\ge a\xi M
\quad\text{a.e. in }
B_\rho(x_0)\times(t_0,t_0+\theta(2\rho)^{sp}].
\end{equation}
\end{lemma}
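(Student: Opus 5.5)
The plan is a forward-in-time De Giorgi iteration on sub-level sets. Throughout, the initial datum \eqref{eq:APT-initial-positive} plays the role of the time cut-off, and the global nonnegativity \eqref{eq:APT-global-nonnegative} controls the nonlocal tail.

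\textbf{Setup.} For $j\ge0$ set
\[
\rho_j:=\rho+2^{-j}\rho,\qquad
k_j:=a\xi M+(1-a)\xi M\,2^{-j},
\]
and consider the cylinders $Q_j:=B_{\rho_j}(x_0)\times(t_0,t_0+\theta(2\rho)^{sp}]$. All $Q_j$ share the full time interval; only the spatial radius shrinks. Let $w_j:=(u-k_j)_-$ and $A_j:=\{u<k_j\}\cap Q_j$, and take spatial cut-offs $\varphi_j\in C^\infty_c(B_{\rho_j})$ with $\varphi_j=1$ on $B_{\rho_{j+1}}$ and $|\nabla\varphi_j|\le C2^j/\rho$.

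\textbf{Energy estimate.} Testing the supersolution inequality with $w_j\varphi_j^p$ on $(t_0,t)$ requires the usual Steklov-average justification, which I will not detail. Since $k_j\le\xi M$ and $u(\cdot,t_0)\ge\xi M$ on $B_{2\rho}$, we have $w_j(\cdot,t_0)=0$ on the support of $\varphi_j$. Hence no time cut-off is needed and no initial term appears. The standard nonlocal Caccioppoli inequality then gives
\[
\sup_{t}\int w_j^2\varphi_j^p\,dx+\int\!\!\int\!\!\int|w_j\varphi_j(x)-w_j\varphi_j(y)|^pK
\le C\!\iiint\max\{w_j(x),w_j(y)\}^p|\varphi_j(x)-\varphi_j(y)|^pK
+C\!\iint w_j\varphi_j^p\,\sup_{x\in\operatorname{supp}\varphi_j}\int_{\R^n\setminus B_{\rho_j}}w_j^{p-1}(y)K\,dy\,dx\,dt .
\]
Here \eqref{eq:APT-global-nonnegative} is what tames the tail: $u\ge0$ everywhere forces $w_j\le k_j\le\xi M$ on all of $\R^n$. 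The last integral is therefore at most $C2^{jN}(\xi M)^{p-1}\rho^{-sp}$, with no genuine tail quantity appearing. Both right-hand terms are thus bounded by $C2^{jC}(\xi M)^p\rho^{-sp}|A_j|$.

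\textbf{Iteration.} On $A_{j+1}$ we have $w_j\ge k_j-k_{j+1}=(1-a)\xi M2^{-j-1}$. I combine this with the parabolic fractional Sobolev embedding
\[
\iint|v|^{p(1+\frac{2s}{n})}\le C\Bigl(\iint\!\!\int|v(x)-v(y)|^p|x-y|^{-N}\Bigr)\Bigl(\sup_t\int v^2\Bigr)^{sp/n}
\]
applied to $v=w_j\varphi_j$ (plus a lower-order $L^p$ term handled in the same way). This gives
\[
\bigl((1-a)\xi M2^{-j}\bigr)^{p+2sp/n}|A_{j+1}|
\le C2^{jC}\bigl[(\xi M)^{p}\rho^{-sp}|A_j|\bigr]^{1+\frac{sp}{n}}\,(\xi M)^{2\frac{sp}{n}-p\frac{sp}{n}} .
\]
The second factor should be rechecked carefully, since the $L^2$ and $L^p$ homogeneities differ; that difference is exactly where $p>2$ enters.

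Writing $Y_j:=|A_j|/|Q_j|$ with $|Q_j|\approx\theta\rho^{sp}\rho^n$ and collecting powers, I expect
\[
Y_{j+1}\le C(1-a)^{-C}\,b^j\,\bigl[\theta(\xi M)^{p-2}\bigr]^{sp/n}\,Y_j^{1+sp/n},
\]
where the mismatch factor $\theta(\xi M)^{p-2}$ measures the deviation of the actual time length from the intrinsic one.

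\textbf{Conclusion.} The fast geometric convergence lemma yields $Y_j\to0$ provided
\[
Y_0\le\nu_0\,\bigl[\theta(\xi M)^{p-2}\bigr]^{-1},\qquad
\nu_0:=\bigl(C(1-a)^{-C}\bigr)^{-n/sp}b^{-n^2/(sp)^2},
\]
which depends only on $a$ and the data. This is precisely hypothesis \eqref{eq:APT-small-measure}. Then $|\{u<a\xi M\}\cap B_\rho\times(t_0,t_0+\theta(2\rho)^{sp}]|=0$, which is \eqref{eq:APT-positive-conclusion}.

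\textbf{Main obstacle.} The step I expect to be most delicate is the bookkeeping of the powers of $\xi M$ and $\theta$ in the Sobolev step. One has to verify that the exponent of $\theta(\xi M)^{p-2}$ comes out exactly $sp/n$, so that the threshold is linear in $(\xi M)^{2-p}/\theta$ as stated. I would also check that the lower-order $L^p$ term from the embedding does not spoil this scaling when $\theta(\xi M)^{p-2}$ is small; in that regime I would bound it using $w_j\le\xi M$ and absorb it into the same form.
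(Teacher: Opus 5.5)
Your plan is correct and follows the same route as the source. The paper does not prove this lemma; it records it from \cite[Lemma~3.11]{APT} in globally nonnegative form. That proof is a forward-in-time De Giorgi iteration in which, as you say, the initial datum removes the time cut-off and $u\ge0$ reduces the tail to $C2^{jN}(\xi M)^{p-1}\rho^{-sp}$.

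The one thing to fix is your displayed Sobolev step. It carries a spurious factor $(\xi M)^{(2-p)sp/n}$. With that factor, all powers of $\xi M$ cancel and you would get the non-intrinsic threshold $\nu_0/\theta$. In fact both the seminorm and $\sup_t\int w_j^2\varphi_j^p$ are bounded by the same quantity $C2^{jC}(\xi M)^p\rho^{-sp}|A_j|$. The right-hand side is therefore just $C2^{jC}\bigl[(\xi M)^p\rho^{-sp}|A_j|\bigr]^{1+sp/n}$. Compare $(\xi M)^{p+p\,sp/n}$ there with $(\xi M)^{p+2sp/n}$ on the left, and divide by $|Q_j|\approx\theta\rho^{N}$ (the powers of $\rho$ cancel). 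This gives exactly your recursion with the factor $[\theta(\xi M)^{p-2}]^{sp/n}$, and hence the linear threshold.

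The remaining points are routine:
\begin{itemize}
\item Use nested cut-offs, so that the $L^2$ factor $\sup_t\int w_j^2\varphi_j^2$ is controlled by an energy term with $\widetilde\varphi_j^{\,p}$. Since $p>2$, $\varphi_j^2\ge\varphi_j^p$, so the energy term with $\varphi_j^p$ does not bound it directly.
\item Keep $\operatorname{supp}\varphi_j$ at distance $\gtrsim2^{-j}\rho$ from $\partial B_{\rho_j}$; your tail bound needs this.
\item If $sp\ge n$, run the embedding with some $s'<s$, $s'p<n$. The same bookkeeping again yields a threshold linear in $(\xi M)^{2-p}/\theta$.
\end{itemize}
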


\section{Auxiliary Results}

\subsection{Positivity from a normalised point value}

We begin with the following consequence of Theorem
\ref{thm:liao-modulus}. 

\begin{lemma}\label{lem:continuity}
There exist constants

\[
    \delta_0>0,
    \qquad
    \rho_0\in(0,1),
    \qquad
    c_0\in(0,1),
\]

depending only on the data, with the following property. Let $v$ be a locally
bounded weak solution in

\[
    B_4\times(-4^{sp},0]
\]

such that

\[
    0\le v\le2
    \quad\text{in }B_4\times(-4^{sp},0],
    \qquad
    v(0,0)=1.
\]

If
\begin{equation}\label{eq:normalized-tail-small}
\int_{-4^{sp}}^0
\int_{\R^n\setminus B_4}
\frac{|v(y,t)|^{p-1}}{|y|^{n+sp}}\,dy\,dt
\le\delta_0,
\end{equation}
then
\begin{equation}\label{eq:normalized-positive}
v\ge c_0
\end{equation}
in a fixed backward cylinder

\[
    B_{\rho_0}\times(-\rho_0^{sp},0].
\]

\end{lemma}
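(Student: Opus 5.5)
We apply Theorem \ref{thm:liao-modulus} to $v$ at $z_0=(0,0)$, with $\widetilde R$ of order one (say $\widetilde R=3$), so that $Q^-_{\widetilde R}(0;1)\Subset B_4\times(-4^{sp},0]$. The aim is to show that the oscillation of $v$ over a small backward cylinder is at most $\tfrac12$. Since $v(0,0)=1$ and $v$ is continuous, this gives $v\ge\tfrac12=:c_0$ there.

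\emph{Step 1 (bound on $\omega$).} On $Q^-_{\widetilde R}(0;1)$ we have $0\le v\le2$, so the supremum part of \eqref{eq:liao-omega} is at most $4$. For the tail part, split $\R^n\setminus B_{\widetilde R}$ into the annulus $B_4\setminus B_3$ and the exterior $\R^n\setminus B_4$.
\begin{itemize}
\item On the annulus, $|v|\le2$, so this piece is bounded by a constant depending on the data.
\item On the exterior, the piece is controlled by \eqref{eq:normalized-tail-small}.
\end{itemize}
Hence $\omega\le\omega_1:=C_1(n,p,s)+\delta_0$, with $\delta_0\le1$. We also have the lower bound $\omega\ge2\sup|v|\ge2$, because $v(0,0)=1$ and $v$ is continuous. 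So $\omega^{2-p}\in[\omega_1^{2-p},2^{2-p}]$ is bounded above and below by data constants.

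\emph{Step 2 (geometry).} Choose $R\in(0,\widetilde R)$ depending only on the data, so that $Q^-_R(0;\omega^{2-p})\subset Q^-_{\widetilde R}(0;1)$. Since $\omega^{2-p}\le 2^{2-p}\le1$, taking $R=2$ works. Then apply \eqref{eq:liao-modulus} with $0<r<R$ still to be chosen.
\begin{itemize}
\item The first term is $2\omega(r/R)^\beta\le2\omega_1(r/2)^\beta$. Choose $r$ small, depending only on the data, so that this is at most $\tfrac14$.
\item The second term integrates over $\R^n\setminus B_{\widetilde R}$ on a time interval inside $(-4^{sp},0]$, so it involves the annulus again. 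This is the main obstacle, because the annulus part is not small a priori.
\end{itemize}

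\emph{Handling the annulus.} The fix is to take $\widetilde R$ closer to $4$, or better, to use the smallness of the time window. The time length is $\omega^{2-p}(rR)^{sp/2}\le (2r)^{sp/2}$, and $|v|\le2$ on the annulus with $|y|\ge3$. Hence the annulus contribution is at most $C(2r)^{sp/2}$, which is small for small $r$. The exterior contribution is at most $C\delta_0$.

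Now fix $r$ so that both $2\omega_1(r/2)^\beta$ and the annulus contribution are at most $\tfrac18$. Then fix $\delta_0$ so that $C\delta_0\le\tfrac18$. This gives
\[
\operatorname*{ess\,osc}_{Q^-_{\sigma r}(0;\omega^{2-p})}v\le\tfrac12 .
\]

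\emph{Step 3 (conclusion).} Set $\rho_0:=\sigma r\,\omega_1^{(2-p)/(sp)}$, which is at most $\sigma r$. Then
\[
B_{\rho_0}\times(-\rho_0^{sp},0]\subset Q^-_{\sigma r}(0;\omega^{2-p}),
\]
since $\rho_0^{sp}\le\omega_1^{2-p}(\sigma r)^{sp}\le\omega^{2-p}(\sigma r)^{sp}$. Using continuity and $v(0,0)=1$, we obtain $v\ge\tfrac12$ on this cylinder, which is \eqref{eq:normalized-positive} with $c_0=\tfrac12$. All constants depend only on the data.
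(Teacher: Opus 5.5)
Your proposal is correct and follows the paper's proof essentially step for step. You make the same choices $\widetilde R=3$ and $R=2$, split the exterior term into $B_4\setminus B_3$ and $\R^n\setminus B_4$, control the annulus by the short time window $\omega^{2-p}(2r)^{sp/2}$, and define $\rho_0$ through a power of the upper bound on $\omega$ to get $c_0=1/2$. The only cosmetic point is that $\omega_1$ depends on $\delta_0$, so you should bound it by the data constant $C_1+1$ (using $\delta_0\le1$) before choosing $r$; the paper does exactly this with its constant $\omega_*$.
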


\begin{proof}
We apply Theorem \ref{thm:liao-modulus} with $z_0=(0,0)$,
$\widetilde R=3$, and $R=2$. Denote the quantity in
\eqref{eq:liao-omega} by $\omega_3$. We use the continuous
representative of $v$, for which $v(0,0)=1$ by hypothesis. Hence,
using $0\le v\le2$ in the local cylinder and
\eqref{eq:normalized-tail-small}, we have, provided $\delta_0\le1$,
\begin{equation}\label{eq:omega3-bounds}
2
\le
\omega_3
\le
4
+
2^{p-1}3^{sp}
\int_{B_4\setminus B_3}|y|^{-N}\,dy
+
\delta_0
\le
\omega_*,
\end{equation}
where $\omega_*>1$ depends only on the data. In particular,
\[
Q^-_2(0;\omega_3^{2-p})
\subset Q^-_3(0;1),
\]
since $p>2$ and $\omega_3\ge2$.

Let $0<r<1$. Splitting the exterior term in
\eqref{eq:liao-modulus} into $B_4\setminus B_3$ and
$\R^n\setminus B_4$, and using again $0\le v\le2$ in the former region,
we obtain
\begin{equation}\label{eq:normalized-oscillation-bound}
\begin{split}
\operatorname*{ess\,osc}_
{Q^-_{\sigma r}(0;\omega_3^{2-p})}v
&\le
2\omega_*\left(\frac r2\right)^\beta
+C\omega_3^{2-p}(2r)^{sp/2}
+C\delta_0
\\
&\le
2\omega_*\left(\frac r2\right)^\beta
+Cr^{sp/2}
+C\delta_0.
\end{split}
\end{equation}
Choose $r_*>0$, depending only
on the data, so small that
\[
2\omega_*\left(\frac {r_*}2\right)^\beta
+Cr_*^{sp/2}
\le\frac14,
\]
and then choose $\delta_0\in(0,1]$ so that $C\delta_0\le1/4$.
It follows that
\begin{equation}\label{eq:normalized-small-oscillation}
\operatorname*{ess\,osc}_
{Q^-_{\sigma r_*}(0;\omega_3^{2-p})}v
\le\frac12.
\end{equation}

Finally, set
\[
\rho_0
:=
\frac{\sigma r_*}{2}
\omega_*^{(2-p)/(sp)}.
\]
Then $\rho_0\in(0,1)$ and, because $\omega_3\le\omega_*$ and $p>2$,
\[
B_{\rho_0}\times(-\rho_0^{sp},0]
\subset
Q^-_{\sigma r_*}(0;\omega_3^{2-p}).
\]
Combining this inclusion with \eqref{eq:normalized-small-oscillation} and
$v(0,0)=1$ yields $v\ge1/2$ in the smaller cylinder. Thus the conclusion
holds with $c_0=1/2$.
\end{proof}

\subsection{A point to moment estimate}

Throughout the rest of the article, we use the weight
\[
    \omega(y):=(1+|y|)^{-N},
\]
and, for a measurable function $u$, the moments
\begin{equation}\label{eq:weighted-moments}
M_\pm(t):=\int_{\R^n}u_\pm(y,t)^{p-1}\omega(y)\,dy.
\end{equation}
These moments are locally integrable in time under
\eqref{eq:weak-tail-condition}. We now show that positivity at a point
forces a positive weighted moment at earlier times.

\begin{proposition}\label{prop:moment}
There exists $\varepsilon_0>0$, depending only on the data, such that the
following holds. Suppose $u$ is a locally bounded weak solution in
$B_2\times(-1,1)$ and assume that

\[
    u\ge0
    \quad\text{a.e. in }B_2\times(-1,0].
\]

Suppose moreover that

\[
    u(0,0)=1.
\]

Then
\begin{equation}\label{eq:absolute-moment}
\int_{-1}^0
\int_{\R^n}
|u(y,t)|^{p-1}\omega(y)\,dy\,dt
\ge\varepsilon_0.
\end{equation}
\end{proposition}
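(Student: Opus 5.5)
Argue by contradiction and compactness-free smallness, using Theorem \ref{thm:liao-modulus} directly. Suppose the space-time moment in \eqref{eq:absolute-moment} equals $\eta$, which we want to show cannot be arbitrarily small. Since $\omega(y)\ge c(n,s,p)>0$ on $B_2$ and $u\ge0$ there, a small $\eta$ forces both $\int_{-1}^0\int_{B_2}u^{p-1}\,dy\,dt\le C\eta$ and, because $\omega(y)\ge c|y|^{-N}$ for $|y|\ge1$, a small exterior tail $\int_{-1}^0\int_{\R^n\setminus B_{1}}|u|^{p-1}|y|^{-N}\,dy\,dt\le C\eta$. The aim is then to show that a solution with $u(0,0)=1$, $u\ge0$ in $B_2\times(-1,0]$, small local $L^{p-1}$ mass and small tail is impossible.

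\textbf{Step 1: local boundedness.} The pointwise value $u(0,0)=1$ must be tied to integral quantities. I would use the local boundedness estimate of \cite[Theorem~1.4]{KarTewary} (valid in the Euclidean setting, as noted in the remark after Theorem \ref{thm:main}), applied to $u$ on $B_1\times(-1,0]$. In its $L^{p-1}$ form, obtained from the standard energy version by the usual interpolation/iteration, it gives
\[
\operatorname*{ess\,sup}_{B_{1/2}\times(-2^{-sp},0]}u_+\le C\Bigl(\int_{-1}^0\int_{B_1}u_+^{p-1}\Bigr)^{\gamma}+C\Bigl(\int_{-1}^0\int_{\R^n\setminus B_1}\frac{u_+^{p-1}}{|y|^{N}}\Bigr)^{1/(p-1)}+C
\]
for some exponent $\gamma>0$. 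The additive constant $C$ is the intrinsic time-scaling correction for $p>2$. This yields a bound $\sup u\le A$ on a smaller cylinder, with $A$ depending only on the data once $\eta\le1$.

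\textbf{Step 2: rescale and apply Lemma \ref{lem:continuity}.} Set $v(x,t):=u(\rho x,\rho^{sp}A^{2-p}\cdot\rho^{0}\,t)$ with the intrinsic scaling $v(x,t)=u(\rho x,A^{2-p}\rho^{sp}t)$ for small $\rho$, chosen so that $B_4\times(-4^{sp},0]$ maps into the region where $0\le u\le A$. Intrinsic scaling with $A\ge1$ preserves the equation up to dividing by $A$, but that would break $v(0,0)=1$. Instead, I would apply Theorem \ref{thm:liao-modulus} directly, as in the proof of Lemma \ref{lem:continuity}, with $\widetilde R$ small and $\omega\le 2A+C\eta$. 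The oscillation estimate \eqref{eq:liao-modulus} then shows $u\ge1/2$ on a cylinder $B_{\rho_1}\times(-\rho_1^{sp}\omega_*^{2-p},0]$, with $\rho_1$ depending only on the data. The tail contribution splits into an annulus part, bounded by $A$ times a small power of $r$, and an exterior part bounded by $C\eta$. Both are made at most $1/4$ by choosing $r$ small and then $\eta\le\varepsilon_0$ small.

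\textbf{Step 3: conclude.} Then $\int_{-1}^0\int_{B_2}u^{p-1}\omega\ge 2^{1-p}|B_{\rho_1}|\rho_1^{sp}\omega_*^{2-p}\inf_{B_2}\omega=:2\varepsilon_0$. This contradicts $\eta<\varepsilon_0$ with $\varepsilon_0$ so chosen. The case $\eta\ge\varepsilon_0$ is the claim.

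The main obstacle is Step 1. We need an upper bound on $u$ near $(0,0)$ purely in terms of the small moment, because Theorem \ref{thm:liao-modulus} requires $\omega$ (the sup) to be controlled, and $u$ is only assumed locally bounded, without a quantitative bound. I would first try the $L^{p-1}$-to-$L^\infty$ estimate with time-integrated tail. If only an $L^2$-energy version is available, I would use the Caccioppoli inequality plus an iteration lowering the integrability exponent, which works for nonnegative subsolutions with an intrinsic additive constant.
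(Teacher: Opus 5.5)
Your argument is correct in outline but takes a genuinely different route from the paper. It works once Step 1 is granted, and Step 1 is exactly the ingredient the paper's proof is built to avoid.

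\textbf{The paper's route.} The paper never bounds $u$ quantitatively near the origin. It runs a stopping-time doubling argument. Starting from $(0,0)$, it jumps to a point where $u$ exceeds twice the current value $M_j$ inside the intrinsic cylinder $B_{4r_j}(x_j)\times(t_j-M_j^{2-p}(4r_j)^{sp},t_j]$ with $r_j=cM_j^{-1/N}$. These cylinders shrink geometrically and stay in $B_{1/2}\times(-1/2,0]$, and merely \emph{qualitative} local boundedness forces the process to stop. At the stopping point $z_*$, with value $M\ge1$, one has $0\le u\le2M$ on the intrinsic cylinder. So the rescaled $v=u/M$ satisfies the hypotheses of Lemma~\ref{lem:continuity} with $\omega_3\le\omega_*$ for free.

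The choice $r=cM^{-1/N}$ makes everything independent of the unknown size $M$:
\begin{itemize}
\item the tail weight obeys $1/(M|z-x_*|^N)\le C_c\,\omega(z)$ off $B_{4r}(x_*)$;
\item the positivity cylinder carries mass $M^{p-1}\cdot r^n\cdot M^{2-p}r^{sp}=Mr^N=c^N$.
\end{itemize}
Thus the paper needs only qualitative boundedness plus Theorem~\ref{thm:liao-modulus}. It finds positivity near $z_*$ rather than near the origin, which does not matter for a moment bound.

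\textbf{Your route.} Your Step 1 instead imports a quantitative $L^{p-1}$-to-$L^\infty$ estimate with a time-integrated tail. The paper does not contain such an estimate: the remark after Theorem~\ref{thm:main} uses it only to assert boundedness qualitatively. You assert the reduction ``by the usual interpolation/iteration'' rather than carry it out. With $L^1$-in-time tails this is precisely the delicate part of such estimates, and the exponent $1/(p-1)$ on the tail in your display is not justified.

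Fortunately you only need the qualitative consequence: $\sup u\le A$ on a fixed backward cylinder, with $A$ depending only on the data, once $\eta\le1$. Granting that, Steps 2 and 3 are a correct adaptation of the proof of Lemma~\ref{lem:continuity}, using a small $\widetilde R$ and splitting the tail into the region near the origin and the exterior. One harmless slip: the annulus $B_{1/2}\setminus B_{\widetilde R}$ contributes a term of order $A^{p-1}$ to Liao's $\omega$, not just $2A+C\eta$.

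\textbf{Comparison.} Your approach yields positivity at the origin itself, but it rests on heavier external regularity theory that would have to be proved. The paper's doubling trick is self-contained and needs no quantitative sup bound at all.
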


\begin{proof}
We set
\[
\mathcal M
:=
\int_{-1}^0\int_{\R^n}
|u(y,t)|^{p-1}\omega(y)\,dy\,dt.
\]
There is nothing to prove if $\mathcal M=\infty$, so there is no loss of generality in assuming that 
it is finite. We first select a point and an intrinsic cylinder on which the value
of $u$ can at most double.

Indeed, let $c\in(0,1/8)$ be a constant, depending only on the data, sufficiently
small as specified below. We run a stopping time argument. We start with
\[
z_0=(x_0,t_0)=(0,0),
\qquad
M_0=u(z_0)=1,
\qquad
r_0=cM_0^{-1/N}.
\]
Given $z_j=(x_j,t_j)$ and $M_j=u(z_j)$, put
\begin{equation}\label{eq:doubling-cylinder}
\mathcal Q_j
:=
B_{4r_j}(x_j)
\times
\bigl(t_j-M_j^{2-p}(4r_j)^{sp},t_j\bigr],
\qquad
r_j:=cM_j^{-1/N}.
\end{equation}
If $u\le2M_j$ in $\mathcal Q_j$, the construction stops. Otherwise, by
continuity, we may choose $z_{j+1}\in\mathcal Q_j$ such that
$M_{j+1}:=u(z_{j+1})>2M_j$.

Let
\[
\alpha:=p-2+\frac{sp}{N}>0.
\]
As long as the construction continues, $M_j>2^j$, and hence
\begin{align*}
|x_j|
&\le
4c\sum_{i=0}^{j-1}M_i^{-1/N}
\le
4c\sum_{i=0}^\infty2^{-i/N},
\\
-t_j
&\le
4^{sp}c^{sp}
\sum_{i=0}^{j-1}M_i^{-\alpha}
\le
4^{sp}c^{sp}
\sum_{i=0}^\infty2^{-i\alpha}.
\end{align*}
We fix $c$ so small that these bounds, including one further cylinder as in
\eqref{eq:doubling-cylinder}, keep every $\mathcal Q_j$ inside
$B_{1/2}\times(-1/2,0]$. The iteration must stop after finitely many steps
because otherwise $M_j>2^j$ at points in a fixed compact subset of
$B_2\times(-1,1)$, contradicting the local boundedness of $u$.

Let $z_*=(x_*,t_*)$, $M=u(z_*)$, and $r=cM^{-1/N}$ be the quantities at the
stopping index. Thus $M\ge1$ and
\begin{equation}\label{eq:doubling-bound}
0\le u\le2M
\quad\text{in }
B_{4r}(x_*)
\times
\bigl(t_*-M^{2-p}(4r)^{sp},t_*\bigr].
\end{equation}
Define the intrinsic rescaling
\begin{equation}\label{eq:moment-rescaling}
v(x,t)
:=
\frac1M
u\bigl(x_*+rx,t_*+M^{2-p}r^{sp}t\bigr).
\end{equation}
The rescaled kernel
\[
K_*(x,y,t)
:=
r^N
K\bigl(x_*+rx,x_*+ry,t_*+M^{2-p}r^{sp}t\bigr)
\]
satisfies \eqref{eq:kernel} with the same constants. Consequently, $v$ is a
locally bounded weak solution in $B_4\times(-4^{sp},0]$, and
\begin{equation}\label{eq:rescaled-local-bounds}
0\le v\le2,
\qquad
v(0,0)=1.
\end{equation}

We compute
\begin{equation}\label{eq:rescaled-tail-identity}
\begin{split}
&\int_{-4^{sp}}^0
\int_{\R^n\setminus B_4}
\frac{|v(y,t)|^{p-1}}{|y|^N}\,dy\,dt
\\
&\qquad=
\frac1M
\int_{t_*-M^{2-p}(4r)^{sp}}^{t_*}
\int_{\R^n\setminus B_{4r}(x_*)}
\frac{|u(z,\tau)|^{p-1}}{|z-x_*|^N}\,dz\,d\tau.
\end{split}
\end{equation}
For $|z-x_*|>4r$, we claim that
\begin{equation}\label{eq:kernel-weight-comparison}
\frac1{M|z-x_*|^N}
\le
C_c\omega(z),
\end{equation}
where $C_c$ depends only on $c$ and the data. Indeed, if $|z|\le2$, then
\[
\frac1{M|z-x_*|^N}
\le
\frac1{M(4r)^N}
=
(4c)^{-N},
\]
whereas $\omega(z)\ge3^{-N}$. If $|z|>2$, then $|x_*|<1$ gives
$|z-x_*|\ge|z|/2$, and the claim follows from $M\ge1$ and
$|z|^{-N}\le C\omega(z)$. Since the time interval in
\eqref{eq:rescaled-tail-identity} lies in $[-1,0]$, we conclude that
\begin{equation}\label{eq:rescaled-tail-moment-bound}
\int_{-4^{sp}}^0
\int_{\R^n\setminus B_4}
\frac{|v(y,t)|^{p-1}}{|y|^N}\,dy\,dt
\le
C_c\mathcal M.
\end{equation}

Let $\delta_0$, $\rho_0$, and $c_0$ be the constants from Lemma
\ref{lem:continuity}. If $\mathcal M\le\delta_0/C_c$, then we get
that
\[
v\ge c_0
\quad\text{in }
B_{\rho_0}\times(-\rho_0^{sp},0].
\]
Returning to the original variables, we have
\begin{equation}\label{eq:physical-positive-cylinder}
u\ge c_0M
\end{equation}
in
\[
B_{\rho_0r}(x_*)
\times
\bigl(t_*-M^{2-p}(\rho_0r)^{sp},t_*\bigr].
\]
This cylinder is contained in $B_1\times[-1,0]$, where
$\omega\ge2^{-N}$. Therefore
\begin{align*}
\mathcal M
&\ge
2^{-N}c_0^{p-1}M^{p-1}
|B_{\rho_0r}|
M^{2-p}(\rho_0r)^{sp}
\\
&=
2^{-N}|B_1|c_0^{p-1}\rho_0^N
Mr^N
\\
&=
2^{-N}|B_1|c_0^{p-1}\rho_0^Nc^N
=:
\varepsilon_1>0.
\end{align*}
If instead $\mathcal M>\delta_0/C_c$, we already have a universal lower
bound. Thus \eqref{eq:absolute-moment} follows with
\[
\varepsilon_0
:=
\min\left\{\frac{\delta_0}{C_c},\varepsilon_1\right\}.
\]
\end{proof}

\begin{corollary}
\label{cor:positive-moment}
Let $u$ be a locally bounded weak solution in $B_2\times(-1,1)$ such that

\[
    u(0,0)=1,
    \qquad
    u\ge0
    \quad\text{a.e. in }B_4\times(-1,0].
\]

With $M_\pm$ as in \eqref{eq:weighted-moments}, we have
\begin{equation}\label{eq:moment-sum}
\int_{-1}^0\bigl(M_+(t)+M_-(t)\bigr)\,dt
\ge\varepsilon_0.
\end{equation}
Consequently, if
\begin{equation}\label{eq:small-neg-normalized}
\int_{-1}^0
\int_{\R^n\setminus B_4}
\frac{u_-(y,t)^{p-1}}{|y|^N}\,dy\,dt
\le\frac{\varepsilon_0}{2},
\end{equation}
then
\begin{equation}\label{eq:positive-moment-lower}
\int_{-1}^0M_+(t)\,dt
\ge\frac{\varepsilon_0}{2}.
\end{equation}
\end{corollary}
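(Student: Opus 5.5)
The first claim \eqref{eq:moment-sum} follows directly from Proposition \ref{prop:moment}. The hypotheses here give a locally bounded weak solution in $B_2\times(-1,1)$ with $u(0,0)=1$. They also give $u\ge0$ a.e.\ in $B_4\times(-1,0]\supset B_2\times(-1,0]$. So the proposition applies and yields
\[
\int_{-1}^0\int_{\R^n}|u|^{p-1}\omega\,dy\,dt\ge\varepsilon_0 .
\]
Since $|u|=u_++u_-$ and $u_+u_-=0$ pointwise, we have $|u|^{p-1}=u_+^{p-1}+u_-^{p-1}$. By definition \eqref{eq:weighted-moments}, the left-hand side is exactly $\int_{-1}^0(M_++M_-)\,dt$, which proves \eqref{eq:moment-sum}.

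For the second claim, I will show that the negative moment is controlled by the quantity in \eqref{eq:small-neg-normalized}. Because $u\ge0$ a.e.\ in $B_4\times(-1,0]$, $u_-$ vanishes there. Hence
\[
\int_{-1}^0M_-(t)\,dt=\int_{-1}^0\int_{\R^n\setminus B_4}u_-^{p-1}\omega(y)\,dy\,dt .
\]
For $|y|\ge4$ we have $\omega(y)=(1+|y|)^{-N}\le|y|^{-N}$, so this integral is at most the left side of \eqref{eq:small-neg-normalized}, which is at most $\varepsilon_0/2$. Subtracting this from \eqref{eq:moment-sum} gives $\int_{-1}^0M_+\,dt\ge\varepsilon_0/2$.

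There is no real obstacle here. The only points to check are the pointwise identity $|u|^{p-1}=u_+^{p-1}+u_-^{p-1}$ and the comparison $\omega(y)\le|y|^{-N}$ outside $B_4$. No additional constant appears, so the threshold $\varepsilon_0/2$ in the statement is consistent. Finiteness of the moments is not an issue: if $\int M_+\,dt=\infty$, the conclusion holds trivially.
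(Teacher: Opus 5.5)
Your proposal is correct and follows the paper's proof essentially step for step. You apply Proposition \ref{prop:moment}, split $|u|^{p-1}=u_+^{p-1}+u_-^{p-1}$, and bound $\int_{-1}^0 M_-\,dt$ by the exterior tail using $u_-=0$ in $B_4\times(-1,0]$ and $\omega(y)\le|y|^{-N}$; your remark on finiteness before subtracting is a harmless extra.
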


\begin{proof}
Since $u\ge0$ almost everywhere in $B_4\times(-1,0]$, the assumptions of
Proposition \ref{prop:moment} are satisfied. Moreover,
\[
|u|^{p-1}=u_+^{p-1}+u_-^{p-1}.
\]
Integrating this identity against $\omega$ over
$\R^n\times(-1,0)$ and applying Proposition \ref{prop:moment} gives
\eqref{eq:moment-sum}.

For the second assertion, the local nonnegativity implies that
$u_-=0$ almost everywhere in $B_4\times(-1,0]$. Since
\[
\omega(y)=(1+|y|)^{-N}\le |y|^{-N}
\quad\text{for }y\ne0,
\]
we obtain from \eqref{eq:small-neg-normalized} that
\begin{align*}
\int_{-1}^0M_-(t)\,dt
&=
\int_{-1}^0
\int_{\R^n\setminus B_4}
u_-(y,t)^{p-1}\omega(y)\,dy\,dt
\\
&\le
\int_{-1}^0
\int_{\R^n\setminus B_4}
\frac{u_-(y,t)^{p-1}}{|y|^N}\,dy\,dt
\le
\frac{\varepsilon_0}{2}.
\end{align*}
Subtracting this estimate from \eqref{eq:moment-sum} proves
\eqref{eq:positive-moment-lower}.
\end{proof}

\subsection{Persistence for an auxiliary Dirichlet problem}
We now show that we can evolve a cutoff for a short time with no
forcing term, while retaining a universal lower bound. Note that the 
operator is not assumed to be smooth. 
We fix a cutoff

\[
    0\le\eta\le1,
    \qquad
    \eta\in C_c^\infty(B_3),
    \qquad
    \eta=1\quad\text{on }B_2.
\]

For $\Theta>0$ and a symmetric measurable kernel $\widehat K(x,y,\theta)$
satisfying \eqref{eq:kernel} with $t$ replaced by $\theta$, we denote
the associated operator by $\mathcal L_{\widehat K,\theta}$ and let
$\Phi$ be the energy weak solution of
\begin{equation}\label{eq:Phi-problem}
\begin{cases}
\partial_\theta\Phi
+\mathcal{L}_{\widehat K,\theta}\Phi=0
&\text{in }B_3\times(0,\Theta),\\
\Phi=0
&\text{in }(\R^n\setminus B_3)\times(0,\Theta),\\
\Phi(\cdot,0)=\eta.
\end{cases}
\end{equation}

\begin{lemma}\label{lem:persistence}
For the solution $\Phi$ of \eqref{eq:Phi-problem}, there exist

\[
    \Theta_*>0,
    \qquad
    c_1\in(0,1),
\]

depending only on $n,p,s,\lambda,\Lambda$ such that
\begin{equation}\label{eq:Phi-bounds}
0\le\Phi\le1
\quad\text{a.e. in }\R^n\times(0,\Theta)
\end{equation}
and
\begin{equation}\label{eq:Phi-persistence}
\Phi\ge c_1
\quad\text{a.e. in }
B_1\times\bigl(0,\min\{\Theta,\Theta_*\}\bigr).
\end{equation}
\end{lemma}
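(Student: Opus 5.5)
The bounds \eqref{eq:Phi-bounds} come from Lemma \ref{lem:weak-comparison} with $D=B_3$ and $V=W^{s,p}_0(B_3)$. For the upper bound, compare $v=\Phi$, $c_v=0$ with the constant $w\equiv1$, $c_w=1$. Since $\mathcal E_\theta(1,\cdot)=0$ and $\partial_\theta 1=0$, the constant $1$ is a solution, and $\eta\le1$ initially, so $\Phi\le1$. For the lower bound, compare $v\equiv0$ with $w=\Phi$, $c_v=c_w=0$, using $\eta\ge0$. The energy solution $\Phi$ lies in $L^p((0,\Theta);V)$, has $\partial_\theta\Phi\in L^{p'}((0,\Theta);V^*)$, and is continuous in $L^2(B_3)$ by standard monotone operator theory, so the lemma applies directly.

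For \eqref{eq:Phi-persistence} I would build an explicit time-dependent subsolution and compare again with Lemma \ref{lem:weak-comparison}. The natural candidate is
\[
w(x,\theta):=\tfrac12\eta(x)-A\theta\,\chi(x),
\]
where $\chi\in C_c^\infty(B_3)$ satisfies $0\le\chi\le1$ and $\chi=1$ on $\operatorname{supp}\eta$. More simply, one can take $w:=(\tfrac12-A\theta)\psi$ with $\psi\in C_c^\infty(B_{2})$, $0\le\psi\le1$, $\psi=1$ on $B_1$, so that $w\le\eta$ at $\theta=0$. The key input is that for a smooth compactly supported function, $\mathcal L_{\widehat K,\theta}\psi$ is bounded in $L^\infty$ by a constant $C_\psi$ depending only on the data. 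This holds because $|F(\psi(x)-\psi(y))|\le \|\nabla\psi\|_\infty^{p-1}|x-y|^{p-1}$ near the diagonal and $\le 1$ far away. The kernel bound $\Lambda|x-y|^{-N}$ then makes the integral converge absolutely, since $p-1-N>-n-1\cdot$ i.e.\ $(p-1)(1-s)>-s$. This uses $p\ge2$ and needs no cancellation, so no P.V.\ or smoothness of $\widehat K$ is required. By $(p-1)$-homogeneity, $\mathcal L_{\widehat K,\theta}\bigl((\tfrac12-A\theta)\psi\bigr)=(\tfrac12-A\theta)^{p-1}\mathcal L_{\widehat K,\theta}\psi\le 2^{1-p}C_\psi$ as long as $0\le \tfrac12-A\theta\le\tfrac12$.

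Weakly, $\partial_\theta w+\mathcal L_{\widehat K,\theta}w\le -A\psi+2^{1-p}C_\psi$. This is only $\le0$ where $\psi$ is bounded below, so I would modify the barrier to avoid a sign problem on the transition region $\{0<\psi<1\}$. The fix is $w:=\bigl((\tfrac12-A\theta)\psi-\kappa\theta\bigr)_+$, or, more cleanly, a subtraction of the form $w=(\tfrac12-A\theta)\psi - A\theta$. The latter is negative off $B_2$ for $\theta>0$, which is harmless: there we compare $\Phi\ge0$ with a function that is $\le 0$, and this is handled by using $c_v=-A\theta$ in spirit, by comparing in $D=B_3$ with $w\le0$ outside. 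Then $\partial_\theta w=-A(\psi+1)\le -A$ everywhere, and $\mathcal L w=\mathcal L\bigl((\tfrac12-A\theta)\psi\bigr)$ since constants drop out. Hence choosing $A:=2^{1-p}C_\psi$ gives a weak subsolution. Lemma \ref{lem:weak-comparison} requires constant exterior data $c_v\le c_w$, but here the exterior value $-A\theta$ is time dependent. So I would instead apply the comparison on short time slabs, or rerun its proof with $z=(w-\Phi)_+$, which vanishes outside $B_3$ since $w\le0\le\Phi$ there. That test is admissible and the proof goes through verbatim. Setting $\Theta_*:=1/(8A)$, on $B_1\times(0,\min\{\Theta,\Theta_*\})$ we get $\Phi\ge w\ge \tfrac12-2A\theta\ge\tfrac14=:c_1$.

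The main obstacle is making the comparison rigorous with a barrier whose exterior values are time dependent and whose operator is only bounded, not smooth. This requires checking that $\mathcal E_\theta(w,\varphi)=\int(\mathcal L_{\widehat K,\theta}w)\varphi$ for $\varphi\in V$, which follows from the absolute convergence above together with Fubini and symmetry. It also requires justifying the test function $(w-\Phi)_+$ through the energy chain rule, exactly as in the (suppressed) proof of Lemma \ref{lem:weak-comparison}.
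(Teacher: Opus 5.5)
\textbf{The gap is in your persistence argument.} Your proof of \eqref{eq:Phi-bounds} matches the paper. The barrier argument for \eqref{eq:Phi-persistence}, however, rests on a false key input: that $\mathcal L_{\widehat K,\theta}\psi\le C_\psi$ with $C_\psi$ depending only on the data and ``no cancellation'' needed.

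Near the diagonal you only have $|F(\psi(x)-\psi(y))|K(x,y,\theta)\le C|x-y|^{p-1-n-sp}$. Since $|h|^{\gamma}$ is integrable at $0$ in $\R^n$ only if $\gamma>-n$, absolute convergence at points where $\nabla\psi\ne0$ holds exactly when $sp<p-1$, i.e.\ $s<1-1/p$. Your exponent condition is miscomputed.

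For $s\ge1-1/p$ you must cancel the odd first-order part $F(\nabla\psi(x)\cdot h)$. That requires $K(x,x+h,\theta)=K(x,x-h,\theta)$, whereas \eqref{eq:kernel} only gives $K(x,y,\theta)=K(y,x,\theta)$. This is not a technicality. Take $n=1$, $\psi'(x_0)\ne0$, and
$K(x,y)=|x-y|^{-1-sp}\bigl(\alpha+\beta\sigma\operatorname{sgn}(x+y-2x_0)\bigr)$, with $\alpha\pm\beta\in[\lambda,\Lambda]$, $\beta>0$ and $\sigma=-\operatorname{sgn}\psi'(x_0)$. This kernel is symmetric and admissible whenever $\lambda<\Lambda$. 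At $x=x_0\pm\epsilon$ the modulation is odd in $h$ on $\{|h|>2\epsilon\}$. It contributes a term comparable to $\beta|\psi'(x_0)|^{p-1}\int_{2\epsilon}^1h^{p-2-sp}\,dh$, which tends to $+\infty$ as $\epsilon\to0$ when $sp\ge p-1$.

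Consequences:
\begin{itemize}
\item $\mathcal L\psi$ is unbounded above near $x_0$, and the weak inequality $\mathcal E_\theta(\psi,\varphi)\le C\int\varphi\,dx$ fails for nonnegative $\varphi$ concentrating at $x_0$.
\item No choice of $A$ makes your $w$ a subsolution.
\item $\Theta_*$ and $c_1$ must be uniform over all admissible kernels, and such a kernel can be centred at any point of the transition region of whatever bump you fix. So shifting by $-A\theta$ or changing $\psi$ does not repair this.
\end{itemize}
In the range $sp<p-1$ your argument does work and gives an elementary alternative, but it does not prove the lemma as stated. The paper flags exactly this issue just before the lemma: the operator is not assumed to be smooth.

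\textbf{How the paper avoids this.} It never evaluates the operator on a test profile. It applies the forward De Giorgi lemma, Lemma \ref{lem:APT-forward}, to $\Phi$, which is globally nonnegative by \eqref{eq:Phi-bounds}, with $\rho=1$, $M=2$, $\xi=a=\tfrac12$. Initial positivity holds because $\eta=1=\xi M$ on $B_2$. Since $\xi M=1$, the measure hypothesis \eqref{eq:APT-small-measure} reduces to ``relative measure $\le\nu_0/\vartheta$''. This is automatic for $\vartheta\le\nu_0$, because the left-hand side is at most $1$. Hence $\Phi\ge\tfrac12$ on $B_1\times(0,2^{sp}\vartheta]$, which gives $\Theta_*=2^{sp}\nu_0$ and $c_1=\tfrac12$. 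The energy and De Giorgi machinery uses only the form $\mathcal E_\theta$ and the two-sided bound \eqref{eq:kernel}, so it is insensitive to the lack of regularity of $\widehat K$.
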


\begin{proof}
Applying Lemma \ref{lem:weak-comparison} to $\Phi$ and the constant
solutions $0$ and $1$ gives \eqref{eq:Phi-bounds}.
We now prove \eqref{eq:Phi-persistence}.
We apply Lemma \ref{lem:APT-forward} with
\[
z_0=(0,0),
\qquad
\rho=1,
\qquad
M=2,
\qquad
\xi=\frac12,
\qquad
a=\frac12.
\]
The initial-positivity condition holds because
\[
\Phi(\cdot,0)=\eta=1=\xi M
\quad\text{in }B_2.
\]
Let $\nu_0$ be the constant supplied by Lemma
\ref{lem:APT-forward} for $a=1/2$, and set
\[
\vartheta_*
:=
\nu_0,
\qquad
\Theta_*:=2^{sp}\vartheta_*.
\]
For any
\[
0<\vartheta
<
\min\left\{\vartheta_*,\frac{\Theta}{2^{sp}}\right\},
\]
the relative measure of the sublevel set in the forward cylinder satisfies
\begin{align*}
\frac{
\bigl|\{\Phi<\xi M\}\cap Q^+_2((0,0);\vartheta)\bigr|
}{
\bigl|Q^+_2((0,0);\vartheta)\bigr|
}
&\le1
\\
&\le
\frac{\nu_0}{\vartheta}
=
\nu_0
\frac{(\xi M)^{2-p}}{\vartheta},
\end{align*}
because $\xi M=1$ and $\vartheta\le\vartheta_*=\nu_0$. Thus the measure
hypothesis of Lemma \ref{lem:APT-forward} is automatic, and hence
\[
\Phi\ge a\xi M=\frac12
\quad\text{a.e. in }
B_1\times(0,\vartheta2^{sp}].
\]
Letting $\vartheta$ increase to
$\min\{\vartheta_*,\Theta/2^{sp}\}$ proves
\eqref{eq:Phi-persistence} with $c_1=1/2$.
\end{proof}

\section{Construction of Barrier}

\subsection{A barrier from below}
We construct a barrier which starts from zero and whose positivity persists after the forcing term 
becomes zero.
For $f\in L^1(-1,0)$ with $f\ge0$, we extend $f$ by zero to $[0,2]$
and write
\begin{equation}\label{eq:b-def}
b(t):=\int_{-1}^t f(\sigma)\,d\sigma,
\qquad B:=\int_{-1}^0f(t)\,dt.
\end{equation}

\begin{proposition}\label{prop:bump}
There exists $B_0>0$, depending only on the data, such that for every
source $f$ as above with $0<B\le B_0$, there exists
\[
w\in C\bigl([-1,2];L^2(B_3)\bigr)
\cap L^p\bigl((-1,2);W^{s,p}_0(B_3)\bigr),
\qquad
w(x,t)=b(t)\Psi(x,t),
\]
such that

\[
    0\le\Psi\le1,
    \qquad
    \Psi(\cdot,t)=0
    \quad\text{on }\R^n\setminus B_3,
\]

and
\begin{equation}\label{eq:w-eq}
\partial_tw+\mathcal{L}_tw=f(t)\Psi
\end{equation}
weakly in $B_3\times(-1,2)$. Moreover,
\begin{equation}\label{eq:w-lower}
w\ge c_1B
\quad\text{a.e. in }B_1\times[0,2],
\end{equation}
where $c_1$ is the constant in Lemma \ref{lem:persistence}.
\end{proposition}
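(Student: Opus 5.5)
The plan is to separate variables and absorb the degeneracy into a change of time. If $w=b(t)\Psi(x,t)$ with $b\ge0$, then formally $\partial_tw=f\Psi+b\,\partial_t\Psi$. By $(p-1)$-homogeneity of $F$, $\mathcal L_t(b\Psi)=b^{p-1}\mathcal L_t\Psi$. Hence \eqref{eq:w-eq} reduces to
\[
b\bigl(\partial_t\Psi+b^{p-2}\mathcal L_t\Psi\bigr)=0 .
\]
This is the equation of Lemma \ref{lem:persistence} after the time substitution
\[
\theta(t):=\int_{-1}^t b(\sigma)^{p-2}\,d\sigma .
\]
Since $b$ is nondecreasing, continuous and $0\le b\le B$, the map $\theta$ is nondecreasing and absolutely continuous, and $\theta(2)\le 3B^{p-2}$. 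Accordingly, I choose $B_0$ with $3B_0^{p-2}\le\Theta_*$, where $\Theta_*$ is from Lemma \ref{lem:persistence}.

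Let $t_1:=\inf\{t:b(t)>0\}\in[-1,0)$. On $(t_1,2]$ the function $\theta$ is strictly increasing, hence it has an inverse $t(\theta)$ on $(0,\Theta)$, where $\Theta:=\theta(2)$. Define the time-changed kernel $\widehat K(x,y,\theta):=K(x,y,t(\theta))$. It is measurable, symmetric and satisfies \eqref{eq:kernel} with the same constants. Let $\Phi$ solve \eqref{eq:Phi-problem} with this kernel, and set
\[
\Psi(x,t):=\Phi(x,\theta(t)) \quad\text{for } t>t_1, \qquad \Psi(x,t):=\eta(x) \quad\text{for } t\le t_1 .
\]
On $[-1,t_1]$ we have $w\equiv0$, and both sides of \eqref{eq:w-eq} vanish there because $f=0$ a.e. where $b=0$. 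Lemma \ref{lem:persistence} then gives the following.
\begin{itemize}
\item The bounds $0\le\Psi\le1$ hold, and $\Psi=0$ outside $B_3$.
\item Since $\Theta\le\Theta_*$, we get $\Phi\ge c_1$ a.e. in $B_1\times(0,\Theta)$, so $\Psi\ge c_1$ a.e. in $B_1\times[0,2]$.
\item Because $f$ is extended by zero, $b\equiv B$ on $[0,2]$, and therefore $w\ge c_1B$ on $B_1\times[0,2]$, which is \eqref{eq:w-lower}.
\end{itemize}

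The regularity of $w$ follows from $d\theta=b^{p-2}dt$:
\[
\int_{t_1}^2\|w\|_{W^{s,p}}^p\,dt=\int_0^{\Theta} b(t(\theta))^{2}\,\|\Phi(\theta)\|_{W^{s,p}}^p\,d\theta\le B^2\|\Phi\|^p_{L^p(0,\Theta;W^{s,p}_0(B_3))}<\infty .
\]
Time continuity into $L^2(B_3)$ follows from the continuity of $b$, $\theta$ and $\Phi$.

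The main obstacle is verifying the weak formulation \eqref{eq:weak-form} rigorously, given the degenerate, merely absolutely continuous time change and the energy-class (not smooth) $\Phi$. I would proceed as follows.
\begin{enumerate}
\item Take a test function $\varphi(x,t)$ and set $\tilde\varphi(x,\theta):=b(t(\theta))\,\varphi(x,t(\theta))$, which is admissible for $\Phi$ on intervals where $b$ is bounded below.
\item Using the weak form for $\Phi$ and substituting $\theta=\theta(t)$, the energy term becomes $\int b^{p-2}\mathcal E_t(\Psi,b\varphi)\,dt=\int\mathcal E_t(w,\varphi)\,dt$.
\item The time-derivative term produces, through the product rule for $b\varphi$, exactly $-\int w\,\partial_t\varphi+\int f\Psi\varphi$.
\item To avoid division by small $b$, work first on $[t_1+\epsilon,2]$, where $b\ge b(t_1+\epsilon)>0$ and $\theta$ is bi-Lipschitz, so the chain rule in the Bochner setting (via Steklov averages) is justified.
\item Let $\epsilon\downarrow0$: the boundary term at $t_1+\epsilon$ is $b(t_1+\epsilon)\int\Psi\varphi\to0$, and the remaining integrals converge by dominated convergence, using the bound $w\le b$.
\end{enumerate}
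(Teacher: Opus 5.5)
Your proposal is correct and follows the paper's proof essentially step by step: the same clock $\theta(t)=\int_{-1}^t b(\sigma)^{p-2}\,d\sigma$, the time-changed kernel $\widehat K$, $\Psi=\Phi(\cdot,\theta(t))$ with $\Psi=\eta$ before $b$ becomes positive, the same change-of-variables bound $b^p\,dt=b^2\,d\theta$ for the energy norm, and the lower bound from Lemma~\ref{lem:persistence} via the affine clock on $[0,2]$.

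The differences are cosmetic. You solve for $\Phi$ only on $(0,\theta(2))$ instead of extending $\widehat K$ to $(0,\Theta_*+1)$. You also verify the weak form on $[t_1+\epsilon,2]$ and let $\epsilon\downarrow0$, where the paper uses $L^2$-continuity and $w(\cdot,t_*)=0$. One small point: since $f\in L^1$ only, $b\varphi$ is merely $W^{1,1}$ in time. Testing the equation for $\Phi$ with it therefore needs the routine density extension of the test class, which $\Phi\in C(L^2)$ and $\partial_\theta\Phi\in L^{p'}((0,\Theta);W^{s,p}_0(B_3)^*)$ permit.
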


\begin{proof}
Let $\Theta_*$ be the constant in Lemma \ref{lem:persistence}, and choose
\[
B_0:=\min\left\{1,\left(\frac{\Theta_*}{6}\right)^{1/(p-2)}\right\}.
\]
The function $b$ is absolutely continuous and nondecreasing, with
$b(-1)=0$, $0\le b\le B$, and $b=B$ on $[0,2]$. Define
\begin{equation}\label{eq:bump-clock}
h(t):=\int_{-1}^t b(\sigma)^{p-2}\,d\sigma,
\qquad H:=h(2).
\end{equation}
Then
\begin{equation}\label{eq:bump-clock-bound}
0<H\le3B^{p-2}\le\frac{\Theta_*}{2}.
\end{equation}
Set
\[
t_*:=\sup\{t\in[-1,2]:b(t)=0\}.
\]
Since $b(0)=B>0$, we have $t_*<0$. Moreover, $h=0$ on
$[-1,t_*]$, whereas $h$ is strictly increasing on $(t_*,2]$, with
$h'(t)=b(t)^{p-2}>0$. We denote the inverse of $h$ in $(0,H)$ by $g$.
The maps $h:(t_*,2)\to(0,H)$ and
$g:(0,H)\to(t_*,2)$ are locally Lipschitz.

For $0<\theta<H$, define
\[
\widehat K(x,y,\theta):=K(x,y,g(\theta)),
\]
and extend it to $0<\theta<\Theta_*+1$ by setting
$\widehat K(x,y,\theta)=\lambda|x-y|^{-N}$ for $\theta\ge H$.
The resulting kernel is measurable, symmetric, and satisfies
\eqref{eq:kernel} with the same constants. 

Let $V:=W^{s,p}_0(B_3)$ and $p':=p/(p-1)$. Let $\Phi$ solve
\eqref{eq:Phi-problem} for $\widehat K$ on $(0,\Theta_*+1)$, with
initial value $\eta$. By \cite[Theorem~1.3 and
Proposition~2.23(ii)]{KaltenbachRuzicka}, applied to the Gelfand triple
$V\subset L^2(B_3)\subset V^*$, $\Phi$ exists and satisfies

\[
\Phi\in C\bigl([0,\Theta_*+1];L^2(B_3)\bigr)
\cap L^p\bigl((0,\Theta_*+1);V\bigr),
\qquad
\partial_\theta\Phi\in L^{p'}\bigl((0,\Theta_*+1);V^*\bigr).
\]
By Lemma \ref{lem:persistence}, $0\le\Phi\le1$ and
$\Phi\ge c_1$ almost everywhere in $B_1\times(0,\Theta_*)$.

Using the $L^2$-continuous representative of $\Phi$, set
\[
\Psi(\cdot,t):=\Phi(\cdot,h(t)),
\qquad w(\cdot,t):=b(t)\Psi(\cdot,t).
\]
Note that $\Psi=\eta$ and $w=0$ on $[-1,t_*]$. The bounds on
$\Psi$ and its zero exterior values follow from those of $\Phi$.
Also, $w\in C([-1,2];L^2(B_3))$. We compute
\begin{align*}
\int_{-1}^2\|w(\cdot,t)\|_V^p\,dt
&=\int_{t_*}^2 b(t)^p\|\Phi(\cdot,h(t))\|_V^p\,dt\\
&=\int_0^H b(g(\theta))^2\|\Phi(\cdot,\theta)\|_V^p\,d\theta\\
&\le B^2\int_0^H\|\Phi(\cdot,\theta)\|_V^p\,d\theta<\infty,
\end{align*}
to see that $w \in L^p\bigl((-1,2);V\bigr)$. 

In $B_3 \times (t_*,2)$ we have
\[
\partial_t\Psi+b(t)^{p-2}\mathcal L_t\Psi=0,
\]
and so
\[
\partial_tw
=f\Psi-b^{p-1}\mathcal L_t\Psi
=f\Psi-\mathcal L_tw.
\]
These identities are understood in the variational sense of
\eqref{eq:weak-form}. 

Note that $f\Psi\in L^1((-1,2);L^2(B_3))$ and
$\mathcal L_tw\in L^{p'}((-1,2);V^*)$ and so the right-hand side
$f\Psi-\mathcal L_tw$ belongs to $L^1((-1,2);V^*)$.
The continuity of $w$ in $L^2(B_3)$ and $w(\cdot,t_*)=0$ ensure
that extending the equation from $(t_*,2)$ to $t_*$ introduces no
boundary term in the weak formulation. Since $w=f=0$ almost
everywhere on $(-1,t_*)$, \eqref{eq:w-eq} follows throughout
$B_3\times(-1,2)$.

Finally, $h(0)>0$ and, by \eqref{eq:bump-clock-bound},
\[
0<h(0)\le h(t)\le H<\Theta_*
\qquad(0\le t\le2).
\]
For $t\in[0,2]$, we have $b(t)=B$ and
$h(t)=h(0)+B^{p-2}t$. Thus $h$ maps $[0,2]$ affinely onto
$[h(0),H]\subset(0,\Theta_*)$. Thus, the lower bound for
$\Phi$ gives
\[
w(x,t)=B\Phi(x,h(t))\ge c_1B
\quad\text{a.e. in }B_1\times[0,2],
\]
as required.
\end{proof}

\subsection{An equation for $u_+$}
Let $u$ be a locally bounded local weak solution of
\eqref{eq:main-equation} in $B_4\times(-1,2)$.
Suppose that, for some $\mathcal R\ge4$,
\begin{equation}\label{eq:local-nonnegative-normalized}
u\ge0
\quad\text{in }B_{\mathcal R}\times(-1,2).
\end{equation}
We define

\[
    U:=u_+.
\]
The negative tail and the remainder associated with truncation
are denoted by
\begin{equation}\label{eq:Tminus}
T_-(t)
:=
\int_{\R^n\setminus B_{\mathcal R}}
\frac{u_-(y,t)^{p-1}}{|y|^N}\,dy
\end{equation}
and, for $x\in B_3$, by
\begin{equation}\label{eq:R-def}
\begin{split}
\mathcal R_u(x,t)
:=
\int_{\{y:\,u(y,t)<0\}}
&\left[
\bigl(U(x,t)+u_-(y,t)\bigr)^{p-1}
-
U(x,t)^{p-1}
\right]
K(x,y,t)\,dy.
\end{split}
\end{equation}

\begin{lemma}\label{lem:remainder}
Under the preceding assumptions, $U$ satisfies
\begin{equation}\label{eq:U-equation}
\partial_tU+\mathcal{L}_tU=-\mathcal R_u
\end{equation}
weakly in $B_3\times(-1,2)$, with $\mathcal R_u\ge0$.
Moreover, for every $B>0$, whenever $x\in B_3$ and
$0\le U(x,t)\le B$,
\begin{equation}\label{eq:R-est}
\mathcal R_u(x,t)
\le
C_0\bigl(B^{p-1}+T_-(t)\bigr),
\end{equation}
where $C_0$ depends only on the data.
\end{lemma}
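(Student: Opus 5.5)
The plan is to compare the weak formulation for $u$ with the energy pairing for $U=u_+$, using the splitting $u=U-u_-$. On $B_4\times(-1,2)$ we have $u\ge0$, so $U=u$ and $u_-=0$ there. Consequently $\partial_t U=\partial_t u$ in the weak sense on $B_3$, and a test function $\varphi\in L^p(W^{s,p}_0(\mathcal K))$ with $\mathcal K\subset B_3$ sees $U$ and $u$ identically in the time-derivative and local terms.

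The difference lies only in the energy. By symmetry of $K$ and $\varphi=0$ outside $\mathcal K$,
\[
\mathcal E_t(u,\varphi)-\mathcal E_t(U,\varphi)=\int_{\mathcal K}\varphi(x)\int_{\R^n}\bigl[F(u(x)-u(y))-F(U(x)-U(y))\bigr]K(x,y,t)\,dy\,dx .
\]
This identity is obtained by writing $\frac12\iint$ as $\iint_{\mathcal K\times\R^n}$ minus the doubly counted $\mathcal K\times\mathcal K$ part. On $\mathcal K\times\mathcal K$, and more generally whenever $u(y)\ge0$, the integrand vanishes. When $u(y)<0$ we have $u(x)-u(y)=U(x)+u_-(y)$ and $U(x)-U(y)=U(x)$, both nonnegative. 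So the inner integral is exactly $\mathcal R_u(x,t)$, and it is $\ge 0$ since $a\mapsto a^{p-1}$ is increasing.

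Hence $\int\!\!\int$ of the weak form for $U$ equals $-\int\!\!\int \mathcal R_u\varphi$, which is \eqref{eq:U-equation}. To make the pairing legitimate, one needs $\mathcal R_u\varphi$ integrable. Here $\{u<0\}\cap\{y:|y|<\mathcal R\}$ is null, so for $x\in B_3$ and $|y|\ge\mathcal R\ge4$ we have $|x-y|\ge|y|/4$. The bound below, with $U$ locally bounded on $B_3$, then gives $\mathcal R_u\in L^1_t L^\infty_x$ via \eqref{eq:weak-tail-condition}. The regularity of $U$ in \eqref{eq:weak-regularity} on $B_3$ is inherited from $u$.

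For \eqref{eq:R-est}, use $(a+b)^{p-1}-a^{p-1}\le C_p(a^{p-2}b+b^{p-1})\le C(a^{p-1}+b^{p-1})$ with $a=U(x,t)\le B$ and $b=u_-(y,t)$. Integrating over $|y|\ge\mathcal R$ against $K\le\Lambda|x-y|^{-N}\le\Lambda4^N|y|^{-N}$ gives
\[
\mathcal R_u\le C\bigl(B^{p-1}\!\int_{|y|\ge4}|y|^{-N}dy+T_-(t)\bigr)\le C_0(B^{p-1}+T_-(t)).
\]

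The main obstacle is the rigorous handling of the principal value and the splitting of the double integral. This requires checking absolute integrability of each piece: the $\mathcal K\times\R^n$ piece with $|y|\ge\mathcal R$ is fine by the tail bound, and the near part is handled by $u=U$ there. This justifies the use of Fubini before identifying the remainder.
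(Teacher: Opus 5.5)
Your proposal is correct and follows essentially the same route as the paper. You use $U=u$ on $B_{\mathcal R}$, so that only interactions with $\{u<0\}\subset\R^n\setminus B_{\mathcal R}$ contribute to $\mathcal E_t(u,\varphi)-\mathcal E_t(U,\varphi)$; you identify this difference by symmetry as $\int\mathcal R_u\varphi$; and you bound $\mathcal R_u$ using $|x-y|\ge|y|/4$, the upper kernel bound, and $(a+z)^{p-1}-a^{p-1}\le C(a^{p-1}+z^{p-1})$. The differences are only cosmetic, namely recording $\mathcal R_u\in L^1_tL^\infty_x$ rather than $L^1_tL^2_x$ and spelling out that the difference of integrands vanishes near the diagonal.
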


\begin{proof}
Note that since $U=u$ in $B_{\mathcal R}$ and $|U|\le|u|$ globally, $U$ has
the local regularity and tail integrability required in
Definition \ref{def:weak} on $B_3\times(-1,2)$.
The set $\{y:u(y,t)<0\}$ is contained in
$\R^n\setminus B_{\mathcal R}$. For $x\in B_3$ and $|y|\ge\mathcal R$,
\[
|x-y|\ge |y|-3\ge\frac{|y|}{4}.
\]
Moreover, for $a,z\ge0$,
\[
0\le(a+z)^{p-1}-a^{p-1}
\le C\bigl(a^{p-1}+z^{p-1}\bigr).
\]
The upper kernel bound therefore gives
\begin{align*}
0\le\mathcal R_u(x,t)
&\le C\int_{\R^n\setminus B_{\mathcal R}}
\frac{U(x,t)^{p-1}+u_-(y,t)^{p-1}}{|y|^N}\,dy\\
&\le C\bigl(U(x,t)^{p-1}+T_-(t)\bigr),
\end{align*}
where we used $\mathcal R\ge4$ and
$\int_{|y|\ge4}|y|^{-n-sp}\,dy<\infty$.
This proves both $\mathcal R_u\ge0$ and \eqref{eq:R-est}.
It also shows that, for every compact interval $J\Subset(-1,2)$,
\[
\mathcal R_u\in L^1\bigl(J;L^2(B_3)\bigr),
\]
by the local boundedness of $u$ and \eqref{eq:weak-tail-condition}.

To prove \eqref{eq:U-equation}, fix $[t_1,t_2]\Subset(-1,2)$ and
an admissible test function $\varphi$ supported
in $B_3$.
Since $U=u$ in $B_3$, only interactions with the set
$\{y:u(y,t)<0\}$ contribute to the difference of the energy forms.
By symmetry of $K$ and the definition of $\mathcal R_u$ we get,
\begin{align*}
\mathcal E_t(u,\varphi)-\mathcal E_t(U,\varphi)
&=\int_{B_3}\int_{\{y:u(y,t)<0\}}
\Bigl[\bigl(U(x,t)+u_-(y,t)\bigr)^{p-1}
-U(x,t)^{p-1}\Bigr]\\
&\hspace{3em}\times K(x,y,t)\varphi(x,t)\,dy\,dx\\
&=\int_{B_3}\mathcal R_u(x,t)\varphi(x,t)\,dx.
\end{align*}
We now integrate over $(t_1,t_2)$ and use the weak
formulation for $u$, together with $U=u$ in $B_3$, to obtain
\[
\left.\int_{B_3}U\varphi\,dx\right|_{t_1}^{t_2}
-\int_{t_1}^{t_2}\int_{B_3}U\,\partial_t\varphi\,dx\,dt
+\int_{t_1}^{t_2}\mathcal E_t(U,\varphi)\,dt
=-\int_{t_1}^{t_2}\int_{B_3}\mathcal R_u\varphi\,dx\,dt,
\]
which is \eqref{eq:U-equation}. 
\end{proof}

\subsection{Comparison against the barrier}
We retain the assumptions and notation of the preceding subsection,
and use $M_+$ from \eqref{eq:weighted-moments}; thus
$M_+(t)=\int_{\R^n}U(y,t)^{p-1}\omega(y)\,dy$.
We also set
\[
    D
    :=
    2^{p-1}\int_{\R^n}\omega(y)\,dy.
\]
For a barrier height $B>0$, we define the correction
\begin{equation}\label{eq:ell-def}
\ell(t)
:=
C_0
\int_{-1}^t
\bigl(B^{p-1}+T_-(\sigma)\bigr)\,d\sigma,
\end{equation}
where $C_0$ is the constant from Lemma \ref{lem:remainder}.

\begin{proposition}\label{prop:forced-comparison}
Under the preceding assumptions on $u$, let $0<B<1$ with $B\le B_0$,
$w=b\Psi$, and $f$ be as in Proposition \ref{prop:bump}.
There exists $\kappa>0$, depending only on the data, such that if
\begin{equation}\label{eq:f-condition}
0\le f(t)\le\kappa\bigl[M_+(t)-DB^{p-1}\bigr]_+,
\end{equation}
then
\begin{equation}\label{eq:comparison-final}
U(x,t)\ge w(x,t)-\ell(t)
\end{equation}
for almost every $(x,t)\in B_3\times(-1,2)$.
\end{proposition}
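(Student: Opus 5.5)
Our plan is to apply the weak comparison principle, Lemma \ref{lem:weak-comparison}, on $D=B_3$ with $v:=w-\ell$ as the subsolution and $U$ as the supersolution. Since $w=b\Psi$ vanishes outside $B_3$, we have $v=-\ell(t)\le0\le U$ there. Strictly, $v$ is not equal to a constant outside $D$ as the lemma requires, because $\ell$ depends on $t$. We would therefore either run the proof of the lemma directly, testing with $(w-\ell-U)_+$, which vanishes outside $B_3$ since $U\ge0$ and $\ell\ge0$, or note that the argument only uses $v\le w$ off $D$.

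At $t=-1$ we have $w=0$ and $\ell=0$, so $v(\cdot,-1)=0\le U(\cdot,-1)$. The time regularity of $U$ (the $V^*$-valued derivative) comes from Lemma \ref{lem:remainder}: on $B_3$, $\partial_tU=-\mathcal L_tU-\mathcal R_u$, with $\mathcal R_u\in L^1(L^2)$. This is enough for the chain-rule argument after a standard Steklov regularisation. We will work on $[-1,2-\epsilon]$ if necessary.

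The key computation is the differential inequality tested with $z:=(w-\ell-U)_+\ge0$. Adding the constant $-\ell(t)$ does not change the energy, so $\mathcal E_t(w-\ell,z)=\mathcal E_t(w,z)$. Using \eqref{eq:w-eq} and \eqref{eq:U-equation}, we get
\[
\langle\partial_t(v-U),z\rangle+\mathcal E_t(v,z)-\mathcal E_t(U,z)
=\int_{B_3}\bigl(f\Psi-\ell'(t)+\mathcal R_u\bigr)z\,dx .
\]
This is not sign-definite, so we will not use it directly. Instead we exploit the support of $z$: where $z>0$, $U<w-\ell\le w\le B$. Then \eqref{eq:R-est} gives $\mathcal R_u\le C_0(B^{p-1}+T_-)=\ell'$, so $\mathcal R_u-\ell'\le0$ on $\{z>0\}$. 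The remaining term $f\Psi z$ must be absorbed by the nonlocal interaction between $x\in\{z>0\}$ and far-away points where $U$ is large. We therefore do not discard the exterior part of $\mathcal E_t(v,z)-\mathcal E_t(U,z)$. For $x\in\{z>0\}$ and $y\notin B_3$ we have $z(y)=0$, and the integrand is $[F(v(x)+\ell)-F(U(x)-U(y))]K\,z(x)$, recalling $v(y)=-\ell$. Splitting $\mathcal E_t(v,z)-\mathcal E_t(U,z)$ into the $B_3\times B_3$ part, which is $\ge0$ by monotonicity as in the proof of Lemma \ref{lem:weak-comparison}, plus twice this exterior part, we get the lower bound
\[
\int_{\{z>0\}}z(x)\int_{\R^n\setminus B_3}\bigl[F(w(x))-F(U(x)-U(y))\bigr]K\,dy\,dx .
\]

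The main obstacle is bounding this exterior integral below by $f(t)\Psi(x)$. Since $w(x)\le B$ and $U(x)\le B$, for $y$ with $U(y)\ge 2B$ Lemma \ref{lem:scalar} gives $F(w(x))-F(U(x)-U(y))\ge 2^{2-p}(U(y)-B)^{p-1}$, which is at least of order $U(y)^{p-1}$. For $y$ with $U(y)<2B$, the integrand is bounded below by $-C B^{p-1}$ times the kernel. Using $K\ge\lambda|x-y|^{-N}\ge c\,\omega(y)$ for $x\in B_3$, the integral is at least
\[
c\int U^{p-1}\omega\,dy-c'B^{p-1}\ \ge\ c\bigl[M_+(t)-DB^{p-1}\bigr],
\]
after splitting and comparing constants; $D$ was chosen with the factor $2^{p-1}$ for exactly this purpose. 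Two points need care. First, contributions of $U$ inside $B_3$ should be handled separately or via the monotone interior part. Second, the $u_-$ region is already accounted for in $\mathcal R_u$, so there we use $U$ only. Choosing $\kappa\le c$ and using $\Psi\le1$ gives $f\Psi\le\kappa[M_+-DB^{p-1}]_+\le$ the exterior term. The tested inequality then reads $\frac{d}{dt}\tfrac12\|z\|_2^2\le0$, so $z\equiv0$ and \eqref{eq:comparison-final} follows. We expect the delicate bookkeeping of constants in this exterior estimate, together with justifying that $\omega(y)$ rather than $|y|^{-N}$ controls $K$ from below uniformly in $x\in B_3$, to be the heart of the proof.
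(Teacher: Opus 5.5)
Your overall strategy matches the paper's. You test with $z=(w-U-\ell)_+$, use $U<w-\ell\le B$ on $\{z>0\}$ together with \eqref{eq:R-est} to get $(\mathcal R_u-\ell')z\le0$, and absorb $\int f\Psi z$ through interactions with points where $U>2B$, via Lemma \ref{lem:scalar} and $K\ge c\,\omega$. The gap is in the absorption step, which, as you set it up, cannot work.

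\emph{First problem.} You discard the whole $B_3\times B_3$ block and keep only interactions with $y\notin B_3$. The best this can produce is $\int_{\R^n\setminus B_3}U^{p-1}\omega\,dy$, not $M_+(t)$. But $U(\cdot,t)$ may carry most of its weighted mass inside $B_3$: large values of $u$ on $B_3\setminus\{z>0\}$, small values outside $B_3$. So your claimed bound, exterior term $\ge c[M_+(t)-DB^{p-1}]$, is false in general. Since $f$ is controlled only by the full $M_+$, it cannot then be absorbed.

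\emph{Second problem.} Your treatment of $\{U<2B\}$ through the lower bound $-CB^{p-1}K$ does not give $-c'B^{p-1}$. Integrating $K\le\Lambda|x-y|^{-N}$ over $\R^n\setminus B_3$ produces a factor of order $\operatorname{dist}(x,\partial B_3)^{-sp}$, which is unbounded on $\{z>0\}\subset B_3$. The lower bound $K\ge c\,\omega$ is of no use for a negative term.

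The missing observation is one you already use on $B_3\times B_3$. The integrand of $\mathcal E_t(w,z)-\mathcal E_t(U,z)$ is pointwise nonnegative on all of $\R^n\times\R^n$, because $(w(x)-w(y))-(U(x)-U(y))=d(x)-d(y)$ with $d=w-U-\ell$, and both $F$ and $d\mapsto d_+$ are nondecreasing. So $\{U\le2B\}$ contributes no negative term at all. You may restrict to $(A\times G)\cup(G\times A)$ with $A=\{z>0\}$ and $G=\{U>2B\}$ taken in all of $\R^n$. This includes $G\cap B_3$, where $z=0$ because $w\le B$. For $x\in A$ and $y\in G$ one has $d(x)-d(y)\ge U(y)-w(y)\ge U(y)/2$, so Lemma \ref{lem:scalar} gives the lower bound $2^{3-2p}U(y)^{p-1}$. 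Also $K(x,y,t)\ge\lambda(3+|y|)^{-N}\ge\lambda3^{-N}\omega(y)$ for every $y$ by the triangle inequality; this step is not delicate. Hence
\[
\mathcal E_t(w,z)-\mathcal E_t(U,z)\ge\kappa\Bigl(\int_GU^{p-1}\omega\,dy\Bigr)\int_{B_3}z\,dx,
\qquad \kappa=2^{3-2p}\lambda3^{-N}.
\]
Moreover $\int_GU^{p-1}\omega\ge M_+(t)-(2B)^{p-1}\int\omega=M_+(t)-DB^{p-1}$. The term $DB^{p-1}$ comes from the excluded set $\{U\le2B\}$, not from a negative integrand.

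A minor point: the equation is only available on compact subintervals of $(-1,2)$. Start the energy estimate at some $a>-1$ and use $\|z(a)\|_{L^2(B_3)}^2\le|B_3|\,b(a)^2\to0$ as $a\downarrow-1$, rather than evaluating at $t=-1$.
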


\begin{proof}
It suffices to consider the case in which $\ell$ is finite.
Then $\ell$ is nonnegative and locally absolutely continuous, with
$\ell'=C_0(B^{p-1}+T_-)$. Set
\[
d:=w-U-\ell,\qquad z:=d_+.
\]
Since $U\ge0$ globally and $w=0$ outside $B_3$, we have
\begin{equation}\label{eq:comparison-z-bounds}
z=0\quad\text{outside }B_3,
\qquad 0\le z\le w\le b(t)\le B.
\end{equation}
Moreover, on $\{z>0\}$ we have $0\le U<w-\ell\le B$, and hence
Lemma \ref{lem:remainder} gives
\begin{equation}\label{eq:comparison-remainder-cancel}
\bigl(\mathcal R_u-\ell'\bigr)z\le0.
\end{equation}

For almost every fixed $t$, we suppress the time variable and set
\[
A:=\{x:z(x)>0\},\qquad G:=\{y:U(y)>2B\}.
\]
We seek a lower bound for
$\mathcal E_t(w,z)-\mathcal E_t(U,z)$. Since
\[
\bigl(w(x)-w(y)\bigr)-\bigl(U(x)-U(y)\bigr)=d(x)-d(y),
\]
the monotonicity of $F$ and of the map $z \mapsto z_+$ implies
\[
\Bigl[F\bigl(w(x)-w(y)\bigr)-F\bigl(U(x)-U(y)\bigr)\Bigr]
\bigl(z(x)-z(y)\bigr)\ge0.
\]
So the integrand defining
$\mathcal E_t(w,z)-\mathcal E_t(U,z)$ is nonnegative and
we may therefore obtain a lower bound by restricting the double
integral to $(A\times G)\cup(G\times A)$.
Now, $A\subset B_3$, $A\cap G=\varnothing$, and
for $x\in A$ and $y\in G$ we have
\[
d(x)-d(y)
=w(x)-U(x)+U(y)-w(y)
\ge U(y)-B\ge\frac{U(y)}2.
\]
Lemma \ref{lem:scalar} therefore yields
\[
F\bigl(w(x)-w(y)\bigr)-F\bigl(U(x)-U(y)\bigr)
\ge 2^{3-2p}U(y)^{p-1}.
\]
Also, for $x\in B_3$,
\[
K(x,y,t)\ge\lambda(3+|y|)^{-N}
\ge\lambda3^{-N}\omega(y).
\]
Keeping only the interactions in $A\times G$ and $G\times A$
and using symmetry, we obtain
\[
\mathcal E_t(w,z)-\mathcal E_t(U,z)
\ge\kappa
\left(\int_G U(y)^{p-1}\omega(y)\,dy\right)
\int_{B_3}z(x)\,dx,
\]
where $\kappa:=2^{3-2p}\lambda3^{-N}$. Since $U\le2B$ on
$\R^n\setminus G$, the definition of $D$ gives
\[
\int_G U(y)^{p-1}\omega(y)\,dy
\ge\bigl[M_+(t)-DB^{p-1}\bigr]_+.
\]
Restoring the time variable and applying \eqref{eq:f-condition},
we conclude that
\begin{equation}\label{eq:comparison-energy-gain}
\mathcal E_t(w,z)-\mathcal E_t(U,z)
\ge f(t)\int_{B_3}z(x,t)\,dx.
\end{equation}

We now show that $z=(w-U-\ell)_+$ vanishes. Subtracting
\eqref{eq:U-equation} from \eqref{eq:w-eq} and using
$d=w-U-\ell$, we obtain
\[
\partial_t d+\mathcal L_t w-\mathcal L_t U
=f\Psi+\mathcal R_u-\ell'
\quad\text{in }B_3\times(-1,2)
\]
in the weak sense.

\iffalse 
We justify testing with $z=d_+$ on a compact time interval
$[a,t]\Subset(-1,2)$. Put $V=W^{s,p}_0(B_3)$ and
$p'=p/(p-1)$. The positive-part truncation controls the fractional
seminorm of $z$ on $B_3\times B_3$, while $0\le z\le w$ controls
its interactions with $\R^n\setminus B_3$. Hence
$z\in L^p((a,t);V)\cap C([a,t];L^2(B_3))$.
Choose $0\le\chi\le1$ in $C_c^\infty(B_4)$, equal to $1$ on a
neighbourhood of $\overline{B_3}$. The function
$\widetilde d=w-\chi(U+\ell)$ belongs locally in time to
$L^p(W^{s,p}(\R^n))\cap C(L^2(\R^n))$, agrees with $d$ on $B_3$,
and is nonpositive outside $B_3$. Its Steklov averages therefore
have positive parts $z_h$ that are admissible tests, vanish outside
$B_3$, and converge to $z$ in $L^p((a,t);V)$ and
$C([a,t];L^2(B_3))$.

For completeness, $\mathcal L_\sigma(\chi U)$ belongs to
$L^{p'}((a,t);V^*)$. On $B_3$, the difference
$\mathcal L_\sigma U-\mathcal L_\sigma(\chi U)$ is an exterior
interaction separated from the diagonal, and belongs to
$L^1((a,t);L^2(B_3))$ by local boundedness and the tail condition.
The terms $f\Psi$, $\mathcal R_u$, and $\ell'$ have the latter
integrability as well. Thus we may average the equation in time,
test with $z_h$, and pass to the limit: the energy convergence
handles the $L^{p'}(V^*)$ terms and the uniform $L^2$ convergence
handles the $L^1(L^2)$ terms. Here the actual time-dependent flux
is averaged; it is not replaced by the flux of the averaged
solution. The time term gives the difference of the squared
$L^2$ norms. 
\fi 
Testing with $z=d_+$, we obtain
\begin{align*}
\frac12\|z(t)\|_{L^2(B_3)}^2
-\frac12\|z(a)\|_{L^2(B_3)}^2
&+\int_a^t\bigl[\mathcal E_\sigma(w,z)
-\mathcal E_\sigma(U,z)\bigr]\,d\sigma\\
&=\int_a^t\int_{B_3}
\bigl(f\Psi+\mathcal R_u-\ell'\bigr)z\,dx\,d\sigma\\
&\le\int_a^t f(\sigma)\int_{B_3}z(x,\sigma)\,dx\,d\sigma,
\end{align*}
where we used \eqref{eq:comparison-remainder-cancel} and $0\le\Psi\le1$.
From \eqref{eq:comparison-energy-gain} and 
\eqref{eq:comparison-z-bounds}, we therefore obtain
\[
\|z(t)\|_{L^2(B_3)}^2
\le\|z(a)\|_{L^2(B_3)}^2
\le |B_3|\,b(a)^2.
\]
Letting $a\downarrow-1$ and using $b(-1)=0$, we obtain $z(t)=0$.
Therefore $U\ge w-\ell$, as claimed.
\end{proof}

\section{Proof of the Main Theorem}

We first prove a normalised version of the main theorem. 
\begin{proposition}\label{prop:normalized}
Suppose $u$ is a locally bounded weak solution in

\[
    B_4\times(-2,3),
\]

with

\[
    u(0,0)=1.
\]

Suppose moreover that, for some $\mathcal R\ge4$,

\[
    u\ge0
    \quad\text{in }B_{\mathcal R}\times(-2,3).
\]

There exist universal constants $c_*>0$ and $\delta_*>0$ such that either
\begin{equation}\label{eq:tail-alternative}
\int_{-1}^2
\int_{\R^n\setminus B_{\mathcal R}}
\frac{u_-(y,t)^{p-1}}{|y|^N}\,dy\,dt
\ge\delta_*,
\end{equation}
or
\begin{equation}\label{eq:positive-alternative}
\operatorname*{ess\,inf}_{B_1\times(0,2)}u
\ge c_*.
\end{equation}
\end{proposition}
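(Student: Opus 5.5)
Assume \eqref{eq:tail-alternative} fails, with $\delta_*$ to be fixed, and derive \eqref{eq:positive-alternative} by comparing $U=u_+$ with the barrier of Proposition \ref{prop:bump}, driven by the positive moment of Corollary \ref{cor:positive-moment}.

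\textbf{Step 1: a positive moment on $(-1,0)$.} Since $u\ge0$ in $B_{\mathcal R}\times(-2,3)$ with $\mathcal R\ge4$, Corollary \ref{cor:positive-moment} applies on $B_2\times(-1,1)$. Its smallness hypothesis \eqref{eq:small-neg-normalized} holds once $\delta_*\le\varepsilon_0/2$, because $u_-=0$ on $B_{\mathcal R}\supset B_4$. Hence $\int_{-1}^0M_+(t)\,dt\ge\varepsilon_0/2$.

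\textbf{Step 2: choice of $B$ and the source $f$.} Fix a universal $B\in(0,\min\{1,B_0\})$, to be taken smaller below, such that $DB^{p-1}\le\varepsilon_0/8$. Then
\[
\int_{-1}^0\bigl[M_+(t)-DB^{p-1}\bigr]_+\,dt\ge\frac{\varepsilon_0}{2}-\frac{\varepsilon_0}{8}\ge\frac{\varepsilon_0}{4}.
\]
Let $f_0(t):=\kappa[M_+(t)-DB^{p-1}]_+\mathbf 1_{(-1,0)}(t)$. Its integral is at least $\kappa\varepsilon_0/4$, but may be large or infinite, whereas Proposition \ref{prop:bump} needs total mass exactly $B$. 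By absolute continuity of $t\mapsto\int_{-1}^t\min\{f_0,k\}$ for truncations, we choose $f:=\min\{f_0,k\}\mathbf 1_{(-1,t_1)}$ with total mass exactly $B$. This is possible as soon as $B\le\kappa\varepsilon_0/4$, which we impose. Then $0\le f\le f_0$, so \eqref{eq:f-condition} holds.

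\textbf{Step 3: comparison.} Proposition \ref{prop:forced-comparison} gives $U\ge w-\ell$ a.e. in $B_3\times(-1,2)$. By \eqref{eq:w-lower}, on $B_1\times[0,2]$ we have
\[
u\ge c_1B-\ell(2),\qquad \ell(2)=C_0\Bigl(3B^{p-1}+\int_{-1}^2T_-\Bigr).
\]
The ratio $B^{p-1}/B=B^{p-2}\to0$ because $p>2$. So we shrink $B$ universally until $3C_0B^{p-1}\le c_1B/4$, and then take $\delta_*\le\min\{\varepsilon_0/2,\ c_1B/(4C_0)\}$. Negating \eqref{eq:tail-alternative} gives $C_0\int_{-1}^2T_-<c_1B/4$. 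Therefore $u\ge c_1B/2=:c_*$ a.e. in $B_1\times(0,2)$.

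\textbf{Expected obstacle: the bookkeeping in Step 3.} The constant $B$ must be fixed before $\delta_*$, and it must satisfy three smallness conditions: $B\le B_0$, $B\le\kappa\varepsilon_0/4$, and absorption of $3C_0B^{p-1}$. This is where $p>2$ is essential. One should also check that the solution domains match: Proposition \ref{prop:forced-comparison} requires $u$ to be a solution in $B_4\times(-1,2)$ and nonnegative in $B_{\mathcal R}\times(-1,2)$, both of which are given. The tail in \eqref{eq:tail-alternative} is over $\R^n\setminus B_{\mathcal R}$, matching $T_-$ in \eqref{eq:Tminus}.
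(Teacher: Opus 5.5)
Your proposal is correct and matches the paper's proof essentially step for step. It uses the same smallness conditions on $B$, including the absorption $3C_0B^{p-2}\le c_1/4$ where $p>2$ enters, the same $\delta_*=\min\{\varepsilon_0/2,\,c_1B/(4C_0)\}$ and $c_*=c_1B/2$, and the same chain Corollary \ref{cor:positive-moment} $\to$ Proposition \ref{prop:bump} $\to$ Proposition \ref{prop:forced-comparison}. The only difference is cosmetic: you normalise the source to mass $B$ by truncating in height and time, whereas the paper simply rescales, $f=(B/Q)q$, which is legitimate because $Q<\infty$ by the local time integrability of $M_+$ under \eqref{eq:weak-tail-condition}.
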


\begin{proof}
Let $\varepsilon_0$ be the constant in Proposition \ref{prop:moment} and Corollary
\ref{cor:positive-moment}, and let $B_0,c_1,\kappa,D,C_0$ be as in
Propositions \ref{prop:bump} and \ref{prop:forced-comparison}.
Since $p>2$, we may fix $0<B<1$, depending only on the data, such that
\[
B\le B_0,\qquad
DB^{p-1}\le\frac{\varepsilon_0}{4},\qquad
B\le\frac{\kappa\varepsilon_0}{4},\qquad
3C_0B^{p-2}\le\frac{c_1}{4}.
\]
Set
\[
\delta_*:=\min\left\{
\frac{\varepsilon_0}{2},\frac{c_1B}{4C_0}
\right\},
\qquad c_*:=\frac{c_1B}{2}.
\]
Suppose that \eqref{eq:tail-alternative} fails. With $T_-$ defined
by \eqref{eq:Tminus}, this means that
\[
\int_{-1}^2T_-(t)\,dt<\delta_*.
\]

We use $U=u_+$ and the moment $M_+$ from \eqref{eq:weighted-moments}.
Since $u_-=0$ in $B_{\mathcal R}$ and $\mathcal R\ge4$, we have
\[
\int_{-1}^0\int_{\R^n\setminus B_4}
\frac{u_-(y,t)^{p-1}}{|y|^N}\,dy\,dt
=\int_{-1}^0T_-(t)\,dt
<\delta_*\le\frac{\varepsilon_0}{2}.
\]
Corollary \ref{cor:positive-moment} therefore yields
\[
\int_{-1}^0M_+(t)\,dt\ge\frac{\varepsilon_0}{2}.
\]
We use this estimate to choose $f$ satisfying both
\eqref{eq:f-condition} and $\int_{-1}^0f=B$. Define
\[
q(t):=\kappa\bigl[M_+(t)-DB^{p-1}\bigr]_+,
\qquad Q:=\int_{-1}^0q(t)\,dt.
\]
We have, 
\[
Q\ge\kappa\left(\int_{-1}^0M_+(t)\,dt-DB^{p-1}\right)
\ge\frac{\kappa\varepsilon_0}{4}\ge B.
\]
Thus
\[
f(t):=\frac{B}{Q}q(t)\quad(-1<t<0),
\qquad f(t):=0\quad(0\le t\le2)
\]
is nonnegative, belongs to $L^1(-1,0)$, and satisfies
\[
\int_{-1}^0f(t)\,dt=B,
\qquad
f(t)\le\kappa\bigl[M_+(t)-DB^{p-1}\bigr]_+
\quad\text{for a.e. }t\in(-1,2).
\]

We apply Proposition \ref{prop:bump} with this $f$ to obtain $w$, and
then Proposition \ref{prop:forced-comparison} to obtain $U\ge w-\ell$.
For $-1<t<2$, the choices of $B$ and $\delta_*$ give
\[
\ell(t)
\le C_0\left(3B^{p-1}+\int_{-1}^2T_-(\sigma)\,d\sigma\right)
\le\frac{c_1B}{4}+C_0\delta_*
\le\frac{c_1B}{2}.
\]
Since $u=U$ in $B_1\times(-1,2)$ and
$w\ge c_1B$ almost everywhere in $B_1\times(0,2)$, it follows that
\[
u\ge w-\ell\ge\frac{c_1B}{2}=c_*
\quad\text{a.e. in }B_1\times(0,2).
\]
In particular, \eqref{eq:positive-alternative} holds.
\end{proof}

\begin{proof}[Proof of Theorem \ref{thm:main}]
Set $k=u(x_0,t_0)>0$ and $\tau=k^{2-p}r^{sp}$, and define
\[
v(x,t):=\frac{u(x_0+rx,t_0+\tau t)}{k},
\qquad \mathcal R:=\frac Rr\ge4.
\]
The rescaled kernel
\[
\widetilde K(x,y,t)
:=r^{n+sp}K(x_0+rx,x_0+ry,t_0+\tau t)
\]
is symmetric and satisfies \eqref{eq:kernel} with the same constants
$\lambda$ and $\Lambda$. Since $\tau k^{p-2}=r^{sp}$, a change of
variables in the weak formulation shows that $v$ is a locally bounded
local weak solution of
\[
\partial_t v+\widetilde{\mathcal L}_t v=0
\quad\text{in }B_4\times(-2,3),
\]
where $\widetilde{\mathcal L}_t$ is the operator associated with
$\widetilde K$. The local energy regularity and the tail integrability
in Definition \ref{def:weak} are preserved by this change of variables.
Moreover, $v(0,0)=1$ and $v\ge0$ in
$B_{\mathcal R}\times(-2,3)$. Thus Proposition \ref{prop:normalized}
applies to $v$.

If the positive alternative holds, then
\[
\operatorname*{ess\,inf}_{B_r(x_0)\times(t_0,t_0+2\tau)}u
=k\operatorname*{ess\,inf}_{B_1\times(0,2)}v
\ge kc_*.
\]
Consequently,
\[
k\le c_*^{-1}
\operatorname*{ess\,inf}_{B_r(x_0)\times(t_0,t_0+2\tau)}u.
\]

Otherwise, the tail alternative holds. With
$J=(t_0-\tau,t_0+2\tau)$, changing variables
$Y=x_0+ry$ and $\sigma=t_0+\tau t$ gives
\begin{align*}
\delta_*
&\le\int_{-1}^2\int_{\R^n\setminus B_{\mathcal R}}
\frac{v_-(y,t)^{p-1}}{|y|^{n+sp}}\,dy\,dt\\
&=\frac{r^{sp}}{\tau k^{p-1}}
\int_J\int_{\R^n\setminus B_R(x_0)}
\frac{u_-(Y,\sigma)^{p-1}}{|Y-x_0|^{n+sp}}\,dY\,d\sigma\\
&=\frac{3}{k^{p-1}}\left(\frac rR\right)^{sp}
\operatorname{Tail}_{p-1}(u_-;x_0,R,J)^{p-1},
\end{align*}
where the last equality follows from \eqref{eq:tail-def} and
$|J|=3\tau$. Hence
\[
k\le\left(\frac{3}{\delta_*}\right)^{1/(p-1)}
\left(\frac rR\right)^{sp/(p-1)}
\operatorname{Tail}_{p-1}(u_-;x_0,R,J).
\]
Both terms on the right-hand side of \eqref{eq:main-harnack} are
nonnegative. Therefore either alternative implies
\eqref{eq:main-harnack} with
\[
C:=\max\left\{c_*^{-1},
\left(\frac{3}{\delta_*}\right)^{1/(p-1)}\right\},
\]
which depends only on the data.
\end{proof}

% =============================================================================
\appendix
\section{The Linear Case}\label{app:linear}

We now explain how the comparison argument can be adapted to $p=2$.
Throughout this appendix, $N=n+2s$, $F(a)=a$, and the kernel satisfies
\eqref{eq:kernel} with $p=2$. Weak solutions and tails are understood
according to Definition \ref{def:weak} and \eqref{eq:tail-def}, with
the same substitution. All constants depend only on
$n,s,\lambda,\Lambda$, unless stated otherwise.

There are two changes to the proof. First, the time reparametrisation defined in
\eqref{eq:bump-clock} no longer slows down when the barrier height
decreases: for $p=2$, it is simply elapsed time. We therefore construct
the barrier on a short, fixed time interval and subsequently iterate
the resulting estimate in time. Second, linearity gives the sharper
truncation remainder bound $\mathcal R_u\le C T_-$, without the term involving the
barrier height in \eqref{eq:R-est}. These changes replace the two uses of small powers
of the barrier height in the degenerate argument.

\begin{theorem}[Linear Harnack inequality]\label{thm:linear-harnack}
Let $u$ be a locally bounded local weak solution of
$\partial_tu+\mathcal L_tu=0$ with $p=2$. Given $r>0$ and $(x_0,t_0)$,
set
\[
\tau=r^{2s},\qquad J=(t_0-\tau,t_0+2\tau).
\]
Assume that the solution domain contains
\[
B_{4r}(x_0)\times(t_0-2\tau,t_0+3\tau),
\]
and that, for some $R\ge4r$,
\[
u\ge0\quad\text{a.e. in }
B_R(x_0)\times(t_0-2\tau,t_0+3\tau).
\]
Then,
\begin{equation}\label{eq:linear-harnack}
u(x_0,t_0)
\le C\operatorname*{ess\,inf}_{B_r(x_0)\times(t_0,t_0+2\tau)}u
+C\left(\frac rR\right)^{2s}
\operatorname{Tail}_1(u_-;x_0,R,J).
\end{equation}
\end{theorem}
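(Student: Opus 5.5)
We follow the degenerate argument with $p=2$, making the two changes announced before the statement. First we rescale by $v(x,t):=u(x_0+rx,t_0+\tau t)/k$ with $k=u(x_0,t_0)$ and $\tau=r^{2s}$. The equation is linear, so we may divide by $k$ without changing the time scale, and the kernel $r^{n+2s}K(x_0+rx,x_0+ry,t_0+\tau t)$ again satisfies \eqref{eq:kernel}. If $k\le0$ the estimate is trivial, so we assume $k>0$. The task then reduces to a normalised dichotomy in the spirit of Proposition \ref{prop:normalized}: either $\int_{-1}^2T_-\,dt\ge\delta_*$, or $v\ge c_*$ a.e. in $B_1\times(0,2)$. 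The tail change of variables at the end of the proof of Theorem \ref{thm:main} then yields \eqref{eq:linear-harnack} verbatim, with exponent $2s=sp/(p-1)$.

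For the moment step, Proposition \ref{prop:moment} and Corollary \ref{cor:positive-moment} carry over with $p=2$. The stopping-time argument now uses cylinders $B_{4r_j}(x_j)\times(t_j-(4r_j)^{2s},t_j]$ with $r_j=cM_j^{-1/N}$, and it still terminates because $\sum_j M_j^{-2s/N}<\infty$. Lemma \ref{lem:continuity} must be replaced by the $p=2$ version of the continuity result, which the paper says is used at $p=2$. The outcome is $\int_{-1}^0M_+\ge\varepsilon_0/2$ unless the tail is large. Lemma \ref{lem:remainder} improves by linearity: $\mathcal R_u(x,t)=\int_{\{u<0\}}u_-(y,t)K\,dy\le C_0T_-(t)$, with no $B$ term. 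Consequently the correction becomes $\ell(t)=C_0\int_{-1}^tT_-$, which is at most $C_0\delta_*$.

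The barrier must be built differently. For $p=2$ we have $w=b\Psi$ with $\partial_t\Psi+\mathcal L_t\Psi=0$ in real time, started from $\eta$ at the time $t_*$ where $b$ first becomes positive. Lemma \ref{lem:persistence} gives $\Psi\ge c_1$ only on a time window of length $\Theta_*$, not on the whole of $[0,2]$. So we do not aim for one barrier on $(-1,2)$. Instead we build the barrier and the comparison of Proposition \ref{prop:forced-comparison} on a short window of length at most $\Theta_*$. The energy gain $\mathcal E(w,z)-\mathcal E(U,z)\ge f\int z$ is unchanged; it uses only Lemma \ref{lem:scalar} with $p=2$ and the condition $f\le\kappa[M_+-DB]_+$. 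To get a window of this length carrying a definite portion of the source, we pick a subinterval $I_1\subset(-1,0)$ of length $\le\Theta_*/3$ with $\int_{I_1}M_+\ge c\varepsilon_0$. This is possible by pigeonholing over $\lceil 3/\Theta_*\rceil$ pieces, and the comparison on $I_1$ yields $u\ge c_1B/2$ on $B_1$ for a short time after $I_1$.

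The main obstacle is propagating this positivity forward to the whole interval $(0,2)$, since a persistence window of universal length only reaches a bounded distance past $I_1$. We iterate in time. On $B_1$ at a time $t_1$ we now have a lower bound $m=c_1B/2$, and we apply Lemma \ref{lem:APT-forward} (its local-nonnegativity variant, via $U$ and $\ell$ as above), or we rerun the comparison with initial datum $m\eta$ in place of the forcing. Either way the bound advances by a fixed step $\Theta_*$ while losing a fixed factor; for the initial-datum comparison the loss is $c_1$ per step. At $p=2$ the time scale does not depend on the height, so the step length is uniform. After $\lceil 3/\Theta_*\rceil$ steps, a number depending only on the data, we reach $t=2$ with $c_*=(c_1)^{O(1/\Theta_*)}B$. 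The tail correction accumulated along the way stays $\le C\delta_*$ and is absorbed by choosing $\delta_*$ small relative to $c_*$. The delicate points are keeping all spatial supports inside $B_3$ across restarts and verifying that each restart's comparison only needs $u\ge0$ in $B_{\mathcal R}$, which holds.
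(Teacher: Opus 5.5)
Your reduction to a normalised dichotomy is sound, and so are the moment step on $(-1,0)$, the improved remainder $\mathcal R_u\le C_0T_-$, and the single forced comparison on a short window $I_1$. The gap is in the forward propagation, which replaces the paper's Step 4. The point you defer (keeping supports inside $B_3$ across restarts) is exactly where the argument breaks.

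After the first comparison you only know $u\ge m$ on $B_1$, since the persistence lemma turns $\eta=1$ on $B_2$ into $\Phi\ge c_1$ on $B_1$. The two restarts you propose both need positivity on a ball twice as large as the one on which they return positivity:
\begin{itemize}
\item Restarting with initial datum $m\eta$ requires $u(\cdot,t_1)\ge m\eta$ on $B_3$, that is, $u\ge m$ on $B_2$, which you do not have.
\item Lemma \ref{lem:APT-forward} (applied to the supersolution $U+\ell$) needs positivity on $B_{2\rho}$ to conclude on $B_\rho$.
\end{itemize}
So each restart at least halves the radius of the positivity set. After the roughly $3/\Theta_*$ restarts needed to reach $t=2$, you are left with a ball of radius about $2^{-3/\Theta_*}$, not $B_1$. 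Since the equation holds only in $B_4$, you cannot start from a ball large enough to absorb this loss. Taking many smaller steps does not help either, because the height loses a fixed factor per step.

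The missing idea is to restart with the forcing rather than with an initial datum. If $u\ge m$ on $B_1\times(t_1,t_1+h)$, then $M_+(t)\ge2^{-N}|B_1|m$ there. You can therefore launch a new barrier $b\Psi$ from zero at $t_1$, with $f\le\kappa[M_+-DB']_+$ and $\int f=B'\simeq m$. The forced comparison, started at $t_1$ where $w=0$, then gives $u\ge c_1B'-\ell$ on all of $B_1$ for $t\in(t_1+h,t_1+\Theta_*)$. The previous window covers $(t_1,t_1+h)$. The height drops by a fixed factor per step, the number of steps is universal, and the accumulated tail is absorbed by taking $\delta_*$ small.

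This is in effect what the paper does. It proves a short-time Harnack inequality,
\[
u(0,0)\le A\operatorname*{ess\,inf}_{B_1\times(0,2a)}u+C\int_{-a}^{2a}T_-\,dt,
\]
by combining a point-to-moment bound on $(-a,0)$ with a barrier of height $B=\kappa I/2$, proportional to the moment; no smallness of $B$ is needed at $p=2$. It then chains this estimate along the centre points $(0,ja)$ using continuity, via $k_j\le Ak_{j+1}+CE$. Each link recovers positivity on the full ball $B_1$ from the nonlocal interaction, which your initial-datum restart cannot do.
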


\begin{remark}[Immediate forward comparison in the linear case]
\label{rem:linear-no-gap}
We get the forward
comparison without an additional gap while retaining the backward
history and tail interval in the theorem in the linear case too.
The same-time consequence is stated in
Corollary \ref{cor:linear-elliptic-type} below.
Restricting the comparison interval to $(t_0+\tau,t_0+2\tau)$
recovers the separated-time form of the linear Harnack estimate
with a time-integrated tail; see \cite{KassmannWeidner}.
The proof treats $p=2$ directly and does not assert uniformity of
the nonlinear constants as $p\downarrow2$.
\end{remark}

\begin{corollary}[Linear elliptic-type Harnack inequality]
\label{cor:linear-elliptic-type}
Under the assumptions of Theorem \ref{thm:linear-harnack}, with
$\tau=r^{2s}$ and $J=(t_0-\tau,t_0+2\tau)$, we have
\begin{equation}\label{eq:linear-same-time-harnack}
u(x_0,t_0)
\le C\inf_{B_r(x_0)}u(\cdot,t_0)
+C\left(\frac rR\right)^{2s}
\operatorname{Tail}_1(u_-;x_0,R,J),
\end{equation}
where $C$ depends only on $n,s,\lambda,\Lambda$. In particular, if
$u\ge0$ on $\R^n\times(t_0-2\tau,t_0+3\tau)$, then
\[
u(x_0,t_0)\le C\inf_{B_r(x_0)}u(\cdot,t_0).
\]

\end{corollary}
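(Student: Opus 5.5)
The plan is to derive Corollary \ref{cor:linear-elliptic-type} from Theorem \ref{thm:linear-harnack} in exactly the way Corollary \ref{cor:elliptic-type} was derived from Theorem \ref{thm:main}.

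The first step is continuity. By the linear counterpart of Theorem \ref{thm:liao-modulus}, used at $p=2$ as stated in the convention after the main theorem, $u$ has a continuous representative in $B_{4r}(x_0)\times(t_0-2\tau,t_0+3\tau)$, and we work with it. Set
\[
m:=\operatorname*{ess\,inf}_{B_r(x_0)\times(t_0,t_0+2\tau)}u .
\]
The cylinder is open, so the set where $u<m$ is open. Because it has measure zero, it must be empty, and hence $u(x,t)\ge m$ for every $x\in B_r(x_0)$ and every $t\in(t_0,t_0+2\tau)$.

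The second step is to pass to the reference time. Fix $x\in B_r(x_0)$ and let $t\downarrow t_0$. Continuity of $u$ at $(x,t_0)$, which is an interior point of the solution domain, gives $u(x,t_0)\ge m$. Taking the infimum over $x$ yields $m\le\inf_{B_r(x_0)}u(\cdot,t_0)$.

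The third step is to insert this bound into \eqref{eq:linear-harnack}. The constant in front of $m$ is positive, so \eqref{eq:linear-same-time-harnack} follows with the same constant $C$, which depends only on $n,s,\lambda,\Lambda$.

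For the final claim, assume $u\ge0$ on $\R^n\times(t_0-2\tau,t_0+3\tau)$. Then the local hypothesis of Theorem \ref{thm:linear-harnack} holds for any $R\ge4r$, for instance $R=4r$. Moreover $u_-\equiv0$ on $\R^n\times J$, since $J\subset(t_0-2\tau,t_0+3\tau)$. Therefore $\operatorname{Tail}_1(u_-;x_0,R,J)=0$, and the estimate reduces to $u(x_0,t_0)\le C\inf_{B_r(x_0)}u(\cdot,t_0)$.

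All of the substantive work lies in Theorem \ref{thm:linear-harnack}, which is assumed here. The only delicate point is the passage from an essential infimum to a pointwise infimum on the time slice $t=t_0$, and continuity of the representative handles it.
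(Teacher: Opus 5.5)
Your proposal is correct and follows the paper's proof: the paper applies \eqref{eq:linear-harnack}, uses the continuous representative to turn the essential infimum into a pointwise lower bound, lets $t\downarrow t_0$, and notes that global nonnegativity makes the tail vanish. Your open-null-set argument is a correct way to fill in the ``by continuity'' step that the paper leaves implicit.
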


\begin{proof}
Apply \eqref{eq:linear-harnack} and pass to the reference time by
continuity, as in the proof of Corollary \ref{cor:elliptic-type}.
\end{proof}

\begin{example}[The local heat equation]\label{ex:local-heat}
For $a>0$, the function
\[
v_a(x,t)=\exp(a x_1+a^2t)
\]
is a positive solution of $(v_a)_t-\Delta v_a=0$ on
$\R^n\times\R$, with $v_a(0,0)=1$. For any fixed $r>0$,
\[
\operatorname*{ess\,inf}_{B_r\times(0,2r^2)}v_a=e^{-ar}.
\]
Letting $a\to\infty$ rules out a universal forward Harnack estimate
without a time gap, even with arbitrarily long backward history.
In contrast, at the later time $r^2$,
\[
\inf_{B_r}v_a(\cdot,r^2)
=e^{a^2r^2-ar}
=e^{(ar-1/2)^2-1/4}\ge e^{-1/4}.
\]
Thus the obstruction in the local theory is not confined to the
finite propagation exhibited in Example \ref{ex:local-p-wave}.
\end{example}

\begin{proof}[Proof of Theorem \ref{thm:linear-harnack}]
We first work in $B_4\times(-2,3)$ and suppose that $u\ge0$ in
$B_{\mathcal R}\times(-2,3)$ for some $\mathcal R\ge4$.
We set
\[
\omega(y)=(1+|y|)^{-N},\qquad
M_\pm(t)=\int_{\R^n}u_\pm(y,t)\omega(y)\,dy,
\qquad
T_-(t)=\int_{\R^n\setminus B_{\mathcal R}}
\frac{u_-(y,t)}{|y|^N}\,dy.
\]
These quantities are integrable on compact time intervals, and
$M_-(t)\le T_-(t)$ by local nonnegativity.

\smallskip
\noindent\emph{Step 1: a point-to-moment estimate on a short interval.}
For every fixed $a\in(0,1)$, the proof of Proposition
\ref{prop:moment} gives
\begin{equation}\label{eq:linear-point-moment}
u(0,0)\le C_a\int_{-a}^0\bigl(M_+(t)+M_-(t)\bigr)\,dt,
\end{equation}
where $C_a$ depends additionally on $a$. We give the changes needed
to obtain this estimate below. 

If $u(0,0)=0$, the estimate is immediate. Otherwise, linearity
allows us to divide by $u(0,0)$ and assume that $u(0,0)=1$.
In the doubling construction, use
\[
r_j=c_aM_j^{-1/N},\qquad
\mathcal Q_j=B_{4r_j}(x_j)\times
\bigl(t_j-(4r_j)^{2s},t_j\bigr].
\]
The two series are bounded by
\[
4c_a\sum_{j=0}^\infty2^{-j/N}
\quad\text{and}\quad
4^{2s}c_a^{2s}\sum_{j=0}^\infty2^{-2sj/N},
\]
respectively. We choose $c_a>0$ small enough that all
the cylinders stay in $B_{1/2}\times(-a/2,0]$. As before,
local boundedness forces the construction to stop. At the stopping
point $(x_*,t_*)$, with height $M\ge1$ and $r=c_aM^{-1/N}$, we set
\[
v(x,t)=M^{-1}u(x_*+rx,t_*+r^{2s}t).
\]
Then $v(0,0)=1$ and $0\le v\le2$ in
$B_4\times(-4^{2s},0]$. Writing
\[
\mathcal M_a=\int_{-a}^0\bigl(M_+(t)+M_-(t)\bigr)\,dt,
\]
the tail computation in \eqref{eq:rescaled-tail-identity}--
\eqref{eq:rescaled-tail-moment-bound} becomes
\[
\int_{-4^{2s}}^0\int_{\R^n\setminus B_4}
\frac{|v(y,t)|}{|y|^N}\,dy\,dt
\le C_{c_a}\mathcal M_a.
\]

The proof of Lemma \ref{lem:continuity} applies at $p=2$
Hence, if $\mathcal M_a$ is
sufficiently small, $v\ge c_0$ in a fixed backward cylinder.
Returning to the original variables gives
\[
\mathcal M_a\ge cMr^{n+2s}=cc_a^N>0.
\]
Together with the alternative that $\mathcal M_a$ is not small,
this proves a positive lower bound depending only on $a$ and the
data. 
\smallskip

\noindent\emph{Step 2: a short-time barrier.}
Let $\eta$ be the cutoff used in the auxiliary Dirichlet problem.
The proof of Lemma \ref{lem:persistence} also applies at $p=2$.
Indeed, the energy and forward De Giorgi argument in
\cite[Lemma~3.11]{APT} remains valid at this exponent; it is used
here only for a globally bounded, globally nonnegative auxiliary
solution with initial value $1$ on $B_2$. Thus there are
$\Theta_*>0$ and $c_1\in(0,1)$, depending only on the data, for
which the same conclusion holds.

Choose $a>0$ with $3a<\Theta_*$, and let $\Psi$ solve
\[
\begin{cases}
\partial_t\Psi+\mathcal L_t\Psi=0
&\text{in }B_3\times(-a,2a),\\
\Psi=0&\text{in }(\R^n\setminus B_3)\times(-a,2a),\\
\Psi(\cdot,-a)=\eta.
\end{cases}
\]
Then we have
\[
0\le\Psi\le1,\qquad
\Psi\ge c_1\quad\text{a.e. in }B_1\times(-a,2a).
\]
For $f\in L^1(-a,0)$, $f\ge0$, extended by zero to $(0,2a)$,
define
\[
b(t)=\int_{-a}^t f(\sigma)\,d\sigma,\qquad B=b(0),\qquad w=b\Psi.
\]
The product rule and linearity yield
\begin{equation}\label{eq:linear-barrier}
\partial_tw+\mathcal L_tw=f\Psi,\qquad
0\le w\le B,\qquad
w\ge c_1B\quad\text{in }B_1\times(0,2a).
\end{equation}
These are variational identities, with
\[
w\in C([-a,2a];L^2(B_3))
\cap L^2((-a,2a);W^{s,2}_0(B_3)).
\]
The source term satisfies
\[
f\Psi\in L^1((-a,2a);L^2(B_3)),
\]
so no stronger time integrability of $f$ is required.
Unlike Proposition \ref{prop:bump}, this construction imposes no
smallness condition on $B$.

\smallskip
\noindent\emph{Step 3: the remainder and the choice of forcing.}
For $U=u_+$, the identity in Lemma \ref{lem:remainder} now reads
\[
\partial_tU+\mathcal L_tU=-\mathcal R_u,\qquad
\mathcal R_u(x,t)=\int_{\{u(y,t)<0\}}u_-(y,t)K(x,y,t)\,dy.
\]
For $x\in B_3$ and $|y|\ge\mathcal R\ge4$, we have
$|x-y|\ge|y|/4$, and consequently
\[
0\le\mathcal R_u(x,t)\le C_0T_-(t).
\]
We therefore replace \eqref{eq:ell-def} by
\begin{equation}\label{eq:linear-correction}
\ell(t)=C_0\int_{-a}^tT_-(\sigma)\,d\sigma.
\end{equation}
This is the second modification: there is no term proportional to
$B$ in the correction.

Set
\[
D=2\int_{\R^n}\omega(y)\,dy,\qquad
\kappa=\frac{\lambda}{2\,3^N}.
\]
The proof of Proposition \ref{prop:forced-comparison} gives
$U\ge w-\ell$ whenever
\begin{equation}\label{eq:linear-forcing-condition}
0\le f(t)\le\kappa[M_+(t)-DB]_+.
\end{equation}

We impose $\kappa Da\le1$, and write
\[
I=\int_{-a}^0M_+(t)\,dt.
\]
If $I>0$, choose
\[
B=\frac{\kappa I}{2},\qquad
q(t)=\kappa[M_+(t)-DB]_+,\qquad
Q=\int_{-a}^0q(t)\,dt.
\]
Then
\[
Q\ge\kappa(I-aDB)
=\kappa I\left(1-\frac{\kappa Da}{2}\right)
\ge\frac{\kappa I}{2}=B.
\]
Thus $f=(B/Q)q$ on $(-a,0)$, extended by zero afterwards,
satisfies \eqref{eq:linear-forcing-condition} and has integral $B$.
By \eqref{eq:linear-barrier} and \eqref{eq:linear-correction},
\begin{equation}\label{eq:linear-moment-lower}
\operatorname*{ess\,inf}_{B_1\times(0,2a)}u
\ge\frac{c_1\kappa}{2}I-C_0\int_{-a}^{2a}T_-(t)\,dt.
\end{equation}
For $I=0$ this inequality follows from $u\ge0$ locally.
Combining it with \eqref{eq:linear-point-moment} and $M_-\le T_-$
gives
\begin{equation}\label{eq:linear-short-harnack}
u(0,0)\le
A\operatorname*{ess\,inf}_{B_1\times(0,2a)}u
+C\int_{-a}^{2a}T_-(t)\,dt,
\end{equation}
where $A\ge1$ and $C$ depend only on $a$ and the data.
The argument uses the equation in $B_4$ and nonnegativity in
$B_{\mathcal R}$ only on a neighbourhood of $[-a,2a]$.
In particular, the same estimate applies after translating this interval.

\smallskip
\noindent\emph{Step 4: iteration in time and scaling.}
Fix an integer $m\ge2$, depending only on the data, so large that
$a=1/m$ satisfies $3a<\Theta_*$ and $\kappa Da\le1$.
We set
\[
t_j=ja,\qquad k_j=u(0,t_j),\qquad
E=\int_{-1}^2T_-(t)\,dt,
\]
and apply \eqref{eq:linear-short-harnack} at $(0,t_j)$ for
$j=0,\ldots,2m-2$. Every tail interval
$((j-1)a,(j+2)a)$ lies in $(-1,2)$, and the translated local
cylinders lie in the assumed solution domain. Hence
\[
k_j\le A\operatorname*{ess\,inf}_{B_1\times(t_j,t_{j+2})}u+CE.
\]
By continuity at $(0,t_{j+1})$, this also gives
$k_j\le Ak_{j+1}+CE$. Iterating, and then using the estimate on
the $j$th cylinder, yields
\[
k_0\le A^{j+1}u(x,t)+CE\sum_{i=0}^jA^i
\quad\text{for a.e. }(x,t)\in
B_1\times(t_j,t_{j+2}).
\]
The intervals $(t_j,t_{j+2})$ with $j=0,\ldots,2m-2$ cover $(0,2)$.
Since $u\ge0$ there, taking a common constant
and then the essential infimum proves
\[
u(0,0)\le C\operatorname*{ess\,inf}_{B_1\times(0,2)}u+CE.
\]
Finally, we apply the normalised estimate to
$v(x,t)=u(x_0+rx,t_0+r^{2s}t)$ and set $\mathcal R=R/r$.
The rescaled kernel has the same ellipticity constants. A change
of variables gives
\begin{align*}
\int_{-1}^2\int_{\R^n\setminus B_{\mathcal R}}
\frac{v_-(y,t)}{|y|^{n+2s}}\,dy\,dt
&=\int_J\int_{\R^n\setminus B_R(x_0)}
\frac{u_-(Y,\sigma)}{|Y-x_0|^{n+2s}}\,dY\,d\sigma\\
&=3\left(\frac rR\right)^{2s}
\operatorname{Tail}_1(u_-;x_0,R,J),
\end{align*}
which proves \eqref{eq:linear-harnack}.
\end{proof}

% =============================================================================

\end{document}